\newtheorem{theorem}{Theorem}
\newcommand{\problem}{\textsc{WalkOpt}}
\title{Walkability Optimization: Formulations, Algorithms, and a Case Study of Toronto}
\author{
    Weimin Huang,
    Elias B. Khalil
}
\begin{document}

\maketitle

\begin{abstract}
The concept of walkable urban development has gained increased attention due to its public health, economic, and environmental sustainability benefits. Unfortunately, land zoning and historic under-investment have resulted in spatial inequality in walkability and social inequality among residents. We tackle the problem of Walkability Optimization through the lens of combinatorial optimization. The task is to select locations in which additional amenities (e.g., grocery stores, schools, restaurants) can be allocated to improve resident access via walking while taking into account existing amenities and providing multiple options (e.g., for restaurants). To this end, we derive Mixed-Integer Linear Programming (MILP) and Constraint Programming (CP) models. Moreover, we show that the problem's objective function is submodular in special cases, which motivates an efficient greedy heuristic. We conduct a case study on 31 underserved neighborhoods in the City of Toronto, Canada. MILP finds the best solutions in most scenarios but does not scale well with network size. The greedy algorithm scales well and finds near-optimal solutions. Our empirical evaluation shows that neighbourhoods with low walkability have a great potential for transformation into pedestrian-friendly neighbourhoods by strategically placing new amenities. Allocating 3 additional grocery stores, schools, and restaurants can improve the ``WalkScore" by more than 50 points (on a scale of 100) for 4 neighbourhoods and reduce the walking distances to amenities for 75\% of all residential locations to 10 minutes for all amenity types. Our code and paper appendix are available at \url{https://github.com/khalil-research/walkability}.

\end{abstract}

\section{Introduction}

The concept of walkability in urban planning has gained increased attention as research has shown that good walkability contributes to physical health, economic development, and environmental sustainability \cite{ZapataDiomedi2019PhysicalAH}. Highly walkable neighbourhoods allow for residents to easily access amenities such as retail and food in the vicinity, giving rise to the concept of the ``15-minute city" \cite{whittle2020welcome}. However, the zoning regulations introduced in the early 20th century that separate industrial, commercial, and residential areas have prevented walkable development and contributed to automobile-reliant communities \cite{Levine2005ZonedOR,Fischel2003AnEH} -- residents have to travel outside their local communities in order to meet their daily needs. Moreover, historic disinvestment in segregated neighbourhoods including low-income groups and racial minorities has led to spatial inequality in walkability. Evidence shows that disadvantaged groups live in neighbourhoods with less accessible physical infrastructure and services \cite{Massey1990AmericanAS}. With the COVID-19 pandemic reshaping the relationship between cities and the quality of life, addressing inequities and improving access to services and amenities such as healthcare and green spaces for vulnerable groups has also accelerated the need for walkable development~\cite{MOURATIDIS2021105772}.

Improving walkability is also a powerful way to reduce greenhouse gas emissions (GHG) and tackle climate change. Dense and walkable neighbourhoods encourage active transport (walking, cycling), thus reducing automobile dependence \cite{MCINTOSH201495,BRAND2021102224}. Research shows that technological measures (e.g., increasing the use of electric vehicles) alone will not be sufficient in reducing GHG, whereas a shift to a more sustainable mode of transportation can result in a quicker and more significant reduction of emissions from vehicles, particularly in urban areas \cite{Creutzig2018TowardsDS,Neves2019AssessingTP}. Shifting from motorized transport to active transport is considered one of the most promising ways to reduce GHG. A study shows that active walking as a lifestyle change of residents can significantly reduce emissions related to private vehicles, even in European cities that are already highly walkable \cite{BRAND2021102224}. Building inclusive, safe, resilient, and sustainable cities has been highlighted in the Sustainable Development Goal 11 of the United Nations. 

Towards this goal, researchers have been interested in improving walkability in cities by converting certain underused spaces into easily accessible amenities. For instance, some urban planning research relies on simulation and inspection, such as converting high-density regions into amenities and straightening busy routes \cite{qualitative}.
On the other hand, one common optimization-based approach is to use a Genetic Algorithm (GA) that encodes the potential allocation locations (e.g., empty lots where a grocery store can be built) as fixed-sized vectors and iteratively generate child solutions from a population of candidate solutions~\cite{gen1,gen2,gen3,gen4,Indraprastha2019InformedWC}. However, genetic algorithms lead to sub-optimal solutions \cite{dis3} and lack convergence guarantees. Moreover, GAs are not flexible in handling constraints \cite{dis1}, making them less applicable in the context of modern city planning, given the existing city layout and property ownership.
Despite this interest in walkability optimization, there have not been robust and efficient methods for this problem.

\textbf{Contributions:} To our knowledge, the problem of \textit{Walkability Optimization} has not yet been examined from an algorithmic perspective, particularly one that considers the problem's realistic aspects. In this paper, we contribute to this question along multiple axes:
\begin{enumerate}
    \item We formulate \textit{Walkability Optimization} as the combinatorial optimization problem of selecting the locations of new amenities to maximally improve residents' access to basic necessities (Section \ref{sec_formulatoin}). Our formulation extends the standard facility location problem and models residents' behaviour realistically -- we consider multiple facility types, multiple potential choices for the same type, and an objective function with respect to the travel distances that represent the proximity to residents. Also, compared to existing work on Walkability Optimization, our formulation takes into account existing amenities rather than designing a layout from scratch and is flexible in capturing additional constraints on allocation.
    \item We analyze the complexity of the problem, showing that (a) its decision version is NP-Complete in general and that (b) the objective function is submodular in special cases (Section \ref{sec_theory}). This latter property motivates an efficient greedy algorithm presented in Section \ref{sec_models}.
    \item We derive Mixed-Integer Linear Programming (MILP) and Constraint Programming (CP) models (Section \ref{sec_models}).
    \item  We perform a case study of 31 underserved neighbourhoods identified by the City of Toronto, Canada (Section \ref{sec_case_study}). In most neighborhoods, significant reductions in walking distances can be obtained by optimizing the placement of a handful of new amenities. MILP outperforms CP and the simple greedy algorithm achieves a good tradeoff between running time and solution quality.  
\end{enumerate}

\section{Problem Formulation}\label{sec_formulatoin}

At a high level, the Walkability Optimization problem (\problem) is defined as follows. Given a set $N$ of residential locations, a set $M$ of candidate allocation locations, and a set $L$ of existing amenities, we seek a set of locations where new amenities of different types $A$ are allocated so that the average ``Walkability Score" $f_i$ for residential locations $i \in N$ is maximized. $f_i$ is a function of walking distances and can be interpreted as the proximity of $A$ to residents. The maximum number of instances to be allocated for each amenity type $a \in A$ is denoted as $k_a$. The set of locations $M$, $N$, and $L$ correspond to nodes on a network where edges represent the walkable paths in the neighbourhoods. The walking distances between these locations are the shortest-path distances, which we denote as $d_{ij}$ for $i \in N$,$j \in M \cup L$. Note that our formulation improves walkability by introducing amenities instead of other potential approaches such as improving network connectivity by adding edges. The latter is a challenging intervention since road networks are resistant to change compared to the speed of city expansion \cite{gen2}.


\subsection{Building Blocks}

\subsubsection{Walkability Score} 
To quantify walkability, we adopt the WalkScore methodology \cite{walk_score}, used in prior quantitative analyses \cite{walk_measure_1,Verbas2015StretchingRS}, that assigns a score $f_i$ for each residential location $i$ based on the weighted walking distances, denoted as $l_i$, from $i$ to different amenity types. The WalkScore assigns a near full score (100) for distances within 400m with scores decreasing with respect to $l_i$ after 400m. Distances above 2400m (about a 30-minute walk) are not rewarded any points. The WalkScore $f_i$ is originally an exponential function that models this non-linear decay with respect to $l_i$; see Fig.~\ref{pwl_score} (a). For computational purposes, nonlinear functions can be approximated with Piecewise Linear functions (PWLs) \cite{Ngueveu2019PiecewiseLB}: $f_i$ is represented as a PWL that is parameterized by breakpoints $\bar{t}$ (Fig. \ref{pwl_score} (b)). The parameters $\bar{t}$ are shown in Appendix \ref{appendix_pwl}.
\begin{figure}[]
  \begin{subfigure}[b]{0.48\columnwidth}
         \includegraphics[width=.9\linewidth]{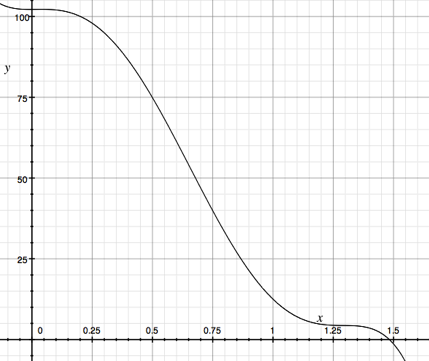}
         \caption{Original (Exponential)}
     \end{subfigure}
     \hfill
     \begin{subfigure}[b]{0.50\columnwidth}
         \includegraphics[width=1\linewidth]{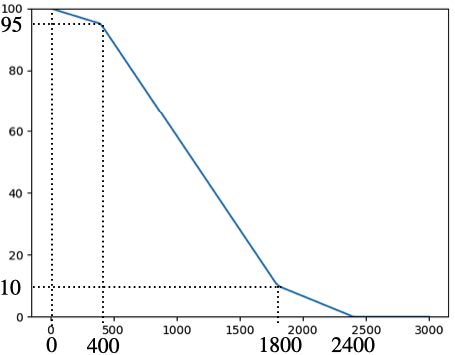}
         \caption{Approximation (PWL)}
     \end{subfigure}
  \caption{Distance (miles/meters) v.s. WalkScore.}
  \label{pwl_score}
\end{figure}

\subsubsection{Weighted Walking Distances} The weighted walking distance $l_i$ is a linear combination of distances to multiple amenity types $a \in A$. Each type is given a weight $w_a$ based on its level of necessity, e.g., grocery stores are given the highest weight as they are the most frequent walking destination \cite{grocery_weight}. For most amenity types, we assume that residents at location $i$ will walk for $D_{i,a}$ meters to the nearest instance of the amenity $a$; we denote amenities of this type with $A^{plain}$. For amenity types for which variety and options are important  (e.g., restaurants, coffee shops), we account for the resident's ``depth of choice" by considering a weighted combination of the distances to the top-$r$ nearest instances (e.g., $r=10$ for restaurants): the $p^{th}$ nearest instance (e.g., of a restaurant) has distance $D_{i,a}^p$ and weight $w_{a}^p$. We denote such amenities with $A^{depth}$. We have $A^{plain} \cup A^{depth} = A$. Finally, $l_i$ is expressed as
\begin{equation}\label{walk_formula}
l_i = \sum_{a \in A^{plain}} w_a D_{i,a}  + \sum_{a \in A^{depth}} \sum_{p \in P_a} w_{a}^p D_{i,a}^p, \forall i \in N,
\end{equation}
where $P_a$ is the set of top-$r$ nearest instances of amenity type $a\in A^{depth}$. Note that not all $r$ options in $P_a$ may be available in the network, as the sum of the number of existing and allocated instances may be smaller than $r$. For example, there may be 2 existing restaurants and a budget of 3 additional restaurants to be allocated, but the number of choices being considered is $10>3+2$. The set of \textit{available} choices of type $a$ is denoted as $P_a^{Y}$, where $|P_a^{Y}| = min(k_a + |L_a|, |P_a|)$. The set of choices that we consider but are not available is denoted as $P_a^{N} = P_a \setminus P_a^{Y}$. Choices $p \in P_a^{N}$ have distances $D_{i,a}^p=D^{\infty}$, where $D^{\infty}=2400$m (i.e., WalkScore is zero), so that a neighbourhood with no amenity has $f_i=0$. 

\subsubsection{Objective Function}
We maximize the average WalkScore across all residential locations 
\begin{equation}\label{walkscore_obj}
    F=\frac{1}{|N|} \sum_{i\in N} f_i.
\end{equation}
$f_i$ is a PWL function of $l_i$ parameterized by breakpoints $\bar{t}$:
\begin{equation}\label{pwl_of_dist}
    f_{i} = \text{PiecewiseLinear}(l_i,\bar{t}), \forall i \in N.
\end{equation}

\subsubsection{Existing and Candidate Amenity Locations}
We introduce new amenities to built neighbourhoods with consideration of existing amenities, instead of allocating from scratch. The set of locations with existing instances of amenity type $a$ is denoted as $L_a$. For candidate allocation locations $M$, we set a capacity on the number of amenity instances that each candidate location $j$ can accommodate (denoted as $c_j$, $j \in M$) to represent potential physical constraints. Given the context of built cities, $M$ can be underused spaces such as parking lots, as freeing up parking spaces and converting them into amenities has been recognized as an urban renewal model that reduces air pollution and improves the quality of life \cite{parking_1,parking_2}. However, our framework can accommodate various types of locations.

\section{Theoretical Properties}\label{sec_theory}
\subsection{Computational Complexity}
We prove that \problem~is computationally hard even with a single amenity type and without depth of choice. This is achieved through a reduction from the widely studied and NP-Complete $k$-median problem to the decision version of \problem.
The proof is deferred to Appendix \ref{appendix_np_compleness}.

\subsection{Submodularity}
Seeking a polynomial-time approximation algorithm for \textsc{WalkOpt}, we analyzed its objective function for submodular structure. Formally, a set function $F\colon 2^V \to\mathbb{R}$ is submodular if it satisfies the \textit{diminishing returns} property: for every $S \subseteq T \subseteq V$ and $e \in V \setminus T$ it holds that $\Delta_F{(e|S)} \geq \Delta_F{(e|T)}$, where $\Delta_F{(e|S)}:=F(S \cup \{e\}) - F(S)$ is the \textit{discrete derivative} of $F$ at $S$ w.r.t. $e$.

We show that the objective (\ref{walkscore_obj}) is indeed submodular when depth of choice is not considered. This motivates the use of a greedy algorithm as a $ (1 - \frac{1}{e})$-approximation~\cite{Nemhauser1978AnAO} when there is a single amenity type, and as a heuristic (with no guarantees) otherwise. We also show that submodularity does not hold when considering depth of choice.

\subsubsection{Submodularity in the SingleChoice Case}
\begin{theorem}
Objective~\eqref{walkscore_obj} is submodular when $A^{depth}=\emptyset$. 
\end{theorem}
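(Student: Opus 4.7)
The plan is to reduce the submodularity of $F$ to that of each $f_i$ (since $F$ is a non-negative average of the $f_i$ and submodularity is closed under non-negative linear combinations), and then to exhibit each $f_i$ as the composition of a non-decreasing concave PWL with a non-decreasing submodular set function---a combination for which a classical composition lemma yields submodularity.

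First I would establish that, when $A^{depth}=\emptyset$, $l_i(S) = \sum_a w_a D_{i,a}(S)$ with $D_{i,a}(S) = \min_{j \in S_a \cup L_a} d_{ij}$ is \emph{supermodular} and non-increasing in the chosen allocation set $S$ (here $S_a$ is the restriction of $S$ to type $a$). Each $D_{i,a}$ is trivially non-increasing. For supermodularity, fix $S \subseteq T$ and $e=(j^*,a^*)\notin T$; only $D_{i,a^*}$ can change, and a three-case analysis on whether $d_{i,j^*}$ is above $D_{i,a^*}(S)$, strictly between $D_{i,a^*}(T)$ and $D_{i,a^*}(S)$, or at most $D_{i,a^*}(T)$ gives
\[
D_{i,a^*}(S\cup\{e\}) - D_{i,a^*}(S) \;\leq\; D_{i,a^*}(T\cup\{e\}) - D_{i,a^*}(T),
\]
because the new facility can only close a gap that is weakly larger at the smaller set $S$. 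Non-negative combinations preserve supermodularity, so $l_i$ is supermodular. Setting $C := \sum_a w_a D^\infty$ and $\tilde l_i(S) := C - l_i(S)$ then puts this into the standard non-decreasing submodular form, with $\tilde l_i\colon 2^V \to [0,C]$.

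Then I would invoke the standard fact that if $g\colon\mathbb{R}\to\mathbb{R}$ is non-decreasing and concave and $h\colon 2^V\to\mathbb{R}$ is non-decreasing and submodular, then $g\circ h$ is submodular. The two-line proof uses $h(S) \leq h(T)$ together with $\alpha := h(S\cup\{e\}) - h(S) \geq h(T\cup\{e\}) - h(T) =: \beta \geq 0$: concavity gives $g(h(S)+\alpha) - g(h(S)) \geq g(h(T)+\alpha) - g(h(T))$, and monotonicity gives $g(h(T)+\alpha) - g(h(T)) \geq g(h(T)+\beta) - g(h(T))$. Applying this with $h = \tilde l_i$ and $g = \tilde f$, where $\tilde f(x) := f(C - x)$, shows $f_i = \tilde f\circ\tilde l_i$ is submodular, hence so is $F$. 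The main obstacle is verifying concavity of $\tilde f$, equivalently of the PWL $f$ on the attainable range of $l_i$: the theorem really does depend on this, since a convex-decreasing piece in $f$ (for instance a flat-at-$0$ tail beyond the WalkScore cutoff producing a convex kink) admits simple two-point counterexamples that break diminishing returns. The underlying WalkScore decay is concave on $[400, 2400]$m, so I would argue that the breakpoints $\bar t$ of Appendix~\ref{appendix_pwl} are placed so that the successive PWL slopes become only more negative---the one structural property of $f$ that the composition step actually requires.
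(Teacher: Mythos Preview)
Your route via the composition lemma (non-decreasing concave $g$ composed with non-decreasing submodular $h$) is structurally different from the paper's argument. The paper never invokes concavity: it expands $\Delta_F(e|S)-\Delta_F(e|T)$ directly, partitions residential locations $i$ according to whether $d_{im'}$ lies below $D_{ia'}^T$, between $D_{ia'}^T$ and $D_{ia'}^S$, or above $D_{ia'}^S$, and concludes each surviving summand is non-negative using only that $f$ is monotone decreasing.

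There is a genuine gap in your proposal at the final step. The claim that ``the successive PWL slopes become only more negative'' is false for the breakpoints in Appendix~\ref{appendix_pwl}: on $[0,400]$, $[400,1800]$, $[1800,2400]$ the slopes are $-5/400=-0.0125$, $-85/1400\approx-0.0607$, and $-10/600\approx-0.0167$, so the slope \emph{increases} at $l=1800$ (and again at $l=2400$, where the function flattens to zero). This is exactly the convex kink you yourself warned about, and it means $\tilde f$ is not concave and the composition lemma does not apply.

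Your instinct that concavity is the real issue when $|A|\geq 2$ is, however, sound. The paper's direct computation silently treats the other-type contribution $u=\sum_{a\neq a'}w_a D_{i,a}$ as identical under $S$ and under $T$, which need not hold once $S$ and $T$ differ on some amenity type $a\neq a'$. With the non-concave PWL above and two equally weighted types one can build explicit instances with $\Delta_F(e|S)<\Delta_F(e|T)$ (e.g., one residence, existing amenities of both types at distance $2400$, a type-$b$ candidate at distance $0$ in $T\setminus S$, and $e$ a type-$a$ candidate at distance $1400$). So neither argument goes through for the stated PWL with multiple types; the paper's monotonicity-only proof is valid precisely when $|A|=1$ (where $u\equiv 0$), and your approach would be valid under an additional concavity hypothesis on $f$ that the chosen PWL does not satisfy.
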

\begin{proof}
First, we represent a solution as a set of actions $S$ where each element $e=(a,m) \in S$ consists in allocating an instance of amenity type $a$ to a candidate location $m$. Note that it is feasible to have more than one instance of type $a$ allocated to the same location, which can introduce identical $(a,m)$ pairs. Since sets cannot contain duplicates, we construct an equivalent set $\bar{M}$ by duplicating each node $j \in M$ for $c_j$ times. Then, the ground set is $V = \{(a,m): a \in A^{plain},m \in \bar{M}\}$ and objective~\eqref{walkscore_obj} is a set function, $F\colon 2^V \to\mathbb{R}$.
Let $S$ and $T$ be solution sets such that $S \subseteq T \subseteq V$, and let $e=(a',m') \in V \setminus T$. We show that $\Delta_F{(e|S)} \geq \Delta_F{(e|T)}$.

We denote the weighted walking distances at $i$ under solution set $S$ as $l_i^{S}$ and express $\Delta_F{(e|S)}$ in terms of $l_i^{S}$:
\begin{equation}\label{submodulardef}
    \Delta_F{(e|S)} = \frac{1}{|N|} \sum_{i \in N} (f(l_i^{S \cup \{e\}}) -  f(l_i^{S})).
\end{equation}
In this SingleChoice case, we have $A=A^{plain}$ and we consider the nearest choice for each $a$. Then, the distances are:
\begin{equation}\label{l_large_set}
    l_i^{S \cup \{e\}} =  w_{a'} \min_{j \in J_{a'}^S \cup L_{a'} \cup \{m'\} } d_{ij} + u,
\end{equation}
\begin{equation}\label{l_small_set}
    l_i^{S} =  w_{a'} \min_{j \in J_{a'}^S \cup L_{a'} } d_{ij} + u.
\end{equation}
$u$ is the weighted distance to types $a \in A \setminus\{a'\}$, which is not affected by $e=(a',m')$: 
\begin{equation*}
    u=\sum_{a \in A \setminus\{a'\}} w_a \min_{j \in J_{a}^S \cup L_{a}} d_{ij}.
\end{equation*}
$J_{a'}^S$ is the set of locations allocated for type $a'$ under solution set $S$. $L_{a'}$ is the set of existing instances for $a'$. For simplicity, we denote the minimum distance to $a'$ under $S$ as $D_{ia'}^S$:
\begin{equation*}
    D_{ia'}^S=\min_{j \in J_{a'}^S \cup L_{a'} } d_{ij}.
\end{equation*}
Then, from Eqn. (\ref{l_small_set}) we have $l_i^{S}=w_{a'} D_{ia'}^S + u$. From Eqn. (\ref{l_large_set}), when the new location $m'$ does not reduce the distance to $a'$ under $S$ (i.e., $d_{im'} \geq D_{ia'}^S$), we have $l_i^{S \cup \{e\}} = l_i^{S}$ and thus $f(l_i^{S \cup \{e\}}) = f(l_i^{S})$. When $m'$ results in a new minimum distance (i.e., $d_{im'}<D_{ia'}^S$), we have $l_i^{S \cup \{e\}}=w_{a'} d_{im'}+u$. Therefore, $\Delta_F{(e|S)}$ in Eqn. (\ref{submodulardef}) is:
\begin{equation*}\label{delta_at_s}
\begin{split}
\Delta_F{(e|S)}={} &
      \frac{1}{|N|} \sum_{d_{im'} \geq D_{ia'}^S} (f(l_i^{S \cup \{e\}}) -  f(l_i^{S})) +\\
    & \frac{1}{|N|} \sum_{d_{im'} < D_{ia'}^S} (f(l_i^{S \cup \{e\}}) -  f(l_i^{S})) \\
\end{split}
\end{equation*}
which simplifies to
\begin{equation*}
    \Delta_F{(e|S)}= \frac{1}{|N|} \sum_{\text{\scalebox{0.7}{$d_{im'} < D_{ia'}^S$}}} (f(w_{a'} d_{im'}+ u) -  f(w_{a'} D_{ia'}^S + u)).
\end{equation*}
Similarly, for solution set $T$, we have 
\begin{equation*}\label{delta_at_t}
\Delta_F{(e|T)} 
    = \frac{1}{|N|} \sum_{\text{\scalebox{0.7}{$d_{im'} < D_{ia'}^T$}}} (f(w_{a'} d_{im'}+u) -  f(w_{a'} D_{ia'}^T + u)),
\end{equation*}
where $J_{a'}^T$ is the set of locations allocated for type $a'$ under $T$. Since $S \subseteq T$, we have $J_{a'}^S \subseteq J_{a'}^T$, which leads to $D_{ia'}^T \leq D_{ia'}^S$. $\Delta_F{(e|S)}$ can then be further grouped into two cases:
\begin{equation*}\label{delta_at_s_rearrange}
\begin{split}
&\Delta_F{(e|S)} =\\& \frac{1}{|N|}\Bigg[ \sum_{d_{im'} < D_{ia'}^T} (f(w_{a'} d_{im'}+u) -  f(w_{a'} D_{ia'}^S + u))+\\
    &  \sum_{D_{ia'}^T \leq d_{im'} <D_{ia'}^S} (f(w_{a'} d_{im'}+u) -  f(w_{a'} D_{ia'}^S + u))\Bigg].
\end{split}
\end{equation*}
We compute and re-arrange $\Delta=\Delta_F{(e|S)} -\Delta_F{(e|T)} $:
\begin{equation*}
\begin{split}
    &\Delta=\\&\frac{1}{|N|}\Bigg[  \sum_{d_{im'} < D_{ia'}^T} (f(w_{a'} D_{ia'}^T + u)) -  f(w_{a'} D_{ia'}^S + u)) + \\
    &  \sum_{D_{ia'}^T \leq d_{im'} <D_{ia'}^S} (f(w_{a'} d_{im'}+u) -  f(w_{a'} D_{ia'}^S + u))\Bigg].
\end{split}
\end{equation*}
We know that $(w_{a'} D_{ia'}^T + u) \leq (w_{a'} D_{ia'}^S + u)$ from $S \subseteq T$ and that $(w_{a'} d_{im'}+u) \leq (w_{a'} D_{ia'}^S + u)$ by definition of the second summation. Since WalkScore $f()$ is monotonically decreasing, we have $\Delta \geq 0$. We've thus proved that $\Delta_F{(e|S)} \geq \Delta_F{(e|T)}$, as desired.
\end{proof}
\subsubsection{No Submodularity with Depth of Choice} We show that the objective function is not submodular by providing a counter-example in Appendix \ref{appendix_counter}.

\section{Models and Algorithms}\label{sec_models}
\subsection{Mixed-Integer Linear Programming (MILP)}
\subsubsection{Variables}
Our MILP model has four sets of variables. First, for allocation, integer variable $y_{ja}$ indicates the number of amenities of type $a$ allocated to location $j$. Second, binary variables are used to indicate the assignment of amenities to residents. For amenity types where only the nearest instance is considered, $x_{ija}=1$ indicates that residents at location $i$ visit location $j$ for type $a$. For types where depth of choice is considered, a fourth index is used and $x_{ija}^p=1$ indicates that residents at location $i$ visit location $j$ for the $p^{th}$ nearest instance of type $a$. There are also two sets of continuous variables: $l_i$ and $f_i$ represent the weighted distance and WalkScore for $i$, respectively. The number of discrete decision variables in the model is $O(|M||N|(|A^{plain}| + |A^{depth}|h))$, where $h=\max_{a \in A^{depth}}|P_a^{Y}|$.

\subsubsection{Constraints}
First, we enforce the requirements on the maximum number of amenities to be allocated and the capacity of each candidate allocation location:
\begin{equation*}
        \sum_{j \in M} y_{ja} \leq k_a , \forall a \in A,
\end{equation*}
\begin{equation*}
    \sum_{a \in A} y_{ja} \leq c_j , \forall j \in M.
\end{equation*}

Second, we describe the assignment of amenities to residents. We ensure that each resident is assigned to one instance for types $a \in A^{plain}$ and to one instance for each available choice for types $a \in A^{depth}$:
\begin{equation*}
    \sum_{j \in M \cup L_a} x_{ija} = 1, \forall i \in N , a \in A^{plain},
\end{equation*}
\begin{equation*}
\sum_{j \in M \cup L_a} x_{ija}^p = 1, \forall p \in P_a^{Y}, \forall i \in N, \forall a \in A^{depth}.
\end{equation*}

Note that for $a \in A^{depth}$, each choice $p \in P_a^{Y}$ should be a different instance of $a$. When the choice is assigned to an existing instance of an amenity type ($j \in L_a$), we ensure that the instance appears only once among all choices for each resident. When the choice corresponds to candidate locations ($j \in M$), we ensure that the number of choices provided at $j$ does not exceed the number of instances allocated to $j$:
\begin{equation*}\label{choice_exist}
    \sum_{p \in P_a^{Y}} x_{ija}^p \leq 1, \forall j \in L_a, \forall i \in N,\forall a \in A^{depth},
\end{equation*}
\begin{equation*}\label{choice_allocate}
    \sum_{p \in P_a^{Y}} x_{ija}^p \leq y_{ja}, \forall j \in M, \forall i \in N,\forall a \in A^{depth}.
\end{equation*}

Additionally, we ensure that all the amenities are allocated before they are assigned:
\begin{equation*}
        x_{ija} \leq y_{ja}, \forall i \in N, \forall j \in M \cup L_a, \forall a \in A^{plain},
\end{equation*}
\begin{equation*}
    x_{ija}^p \leq y_{ja}, \forall i \in N, \forall j \in M \cup L_a,\forall p \in P_a^{Y},\forall a \in A^{depth}.
\end{equation*}
Finally, we describe the weighted walking distances based on Eqn. (\ref{walk_formula}) and the PWL WalkScore:
\begin{equation*}
\begin{split}
l_i & =  \sum_{a \in A^{depth}} \Big(\sum_{p \in P_a^{Y}} w_{a}^p \sum_{j \in M \cup L_a} x_{ija}^pd_{ij} + \sum_{p \in P_a^{N}} w_{a}^p D^{\infty}\Big)\\
&+\sum_{a \in A^{plain}} w_a \sum_{j \in M \cup L_a} x_{ija}d_{ij},\quad \forall i \in N.
\end{split}
\end{equation*}
For the PWL in Eqn. (\ref{pwl_of_dist}), commercial MILP solvers provide the functionality for linearizing PWL functions \cite{grp_wpl}.

\subsubsection{Objective} The objective is to maximize $F$ in Eqn. (\ref{walkscore_obj}).

\subsection{Constraint Programming Model (CP)}
We also provide a CP model for \problem. CP allows for an index-based formulation in which decision variables indicate the index of the location of each instance of $a \in A$. This significantly reduces the number of discrete variables compared to the binary formulation in MILP, particularly in the case without depth of choice. Specifically, the number of discrete decision variables is $O(k|A|+h|N||A^{depth}|)$ where $k=\max_{a \in A}k_a$ and $h=\max_{a \in A^{depth}}|P_a^{Y}|$. The model is deferred to Appendix \ref{appendix_cp}.



\subsection{Greedy Algorithm (Greedy)}

Motivated by the submodularity in the SingleChoice case, we use a greedy algorithm that iteratively selects the (amenity type, location) pair that maximizes the increase in the objective. We also use Greedy as a heuristic when considering depth of choice. Greedy runs in time $O(k|M||N|(|A^{plain}|+h|A^{depth}|))$ and
is shown in Algorithm \ref{alg:greedy}. The solution set $S$ contains $(a,j)$ pairs that represent the action of allocating an instance of type $a$ to location $j$. We denoted the weighted walking distance of $i$ under solution set $S$ as $l_i^{S}$. The WalkScore function is denoted as $f()$. 

\begin{algorithm}[t]
\caption{Greedy Algorithm}
\label{alg:greedy}
\begin{algorithmic}[1] 
\STATE $S \gets \emptyset$.
\STATE $n_a \gets 0$ for all $a\in A$.  \COMMENT{$n_a$: Number of allocated instances for type $a$}
\STATE $c[j] \gets c_j$ for all $j\in M$.
\WHILE{$\exists n_a < k_a$ \AND $\max_{j \in M}(c[j])>0$}
\STATE $(a,j) \gets {\mathrm{argmax}}_{\substack{a \in A\\ n_a < k_a,j \in M\\c[j]>0}}\, \sum_{i \in N}f(l_i^{S \cup \{(a,j)\}})$
\STATE $S \gets S \cup {(a,j)}$
\STATE $n_a \gets n_a + 1 $ 
\STATE $c[j] \gets c[j]-1$

\ENDWHILE
\STATE \textbf{return} $S$
\end{algorithmic}
\end{algorithm}

\section{Case Study}\label{sec_case_study}

We perform a case study for 31 underserved neighbourhoods in Toronto, Canada. The City of Toronto has identified 31 of its 140 neighbourhoods as \textit{Neighbourhood Improvement Areas} (NIAs) that are facing the most inequitable outcomes under the Toronto Strong Neighbourhood Strategy (TSNS) 2020 \cite{NIA}. TSNS aims to provide equitable social, economic, and cultural opportunities for all residents by partnering with agencies to invest in services and facilities in neighbourhoods that face historic under-investment, and the NIAs capture areas of the city with a significant concentration of disadvantaged and equity-seeking groups, particularly visible minorities. Low walkability and limited access to amenities in the physical surroundings are important criteria in the selection of the NIAs \cite{TSNS2020}. 

\subsection{Data}
\subsubsection{Neighbourhood Improvement Areas (NIAs)}
We create instances of \problem~ from each NIA. The geographical boundary of the NIAs is publicly available from The City of Toronto’s Open Data Portal \cite{data_NIA}. 

\subsubsection{Pedestrian Network} 
To obtain the network of walkable paths, we use a publicly available \textit{Pedestrian Network} (PedNet) of Toronto which includes various pedestrian assets such as sidewalks, crosswalks, and pedestrian-controlled crossings that are topologically consistent \cite{data_pednet_git}. We precompute the shortest-path pairs based on PedNet for each NIA\todo{$d_{ij}$ using NetworkX \cite{hagberg2008exploring}}. An alternative to PedNet is OpenStreetMap which provides walking routes data worldwide, which may enable case studies in other cities in the future. We use PedNet in our case study since it undertook quality assurance and has been used in a walking time assessment report by the Transportation Services and Information and Technology Division \cite{report2}. 


\subsubsection{Residence and Candidate Locations} 
The locations of residential areas and potential allocation candidates are obtained from OpenStreetMap \cite{OpenStreetMap} and mapped to the nearest nodes in the PedNet. Residential nodes $N$ are the set of nodes that map to at least one residential address. As mentioned in Section \ref{sec_formulatoin}, our candidate allocation nodes $M$ are parking lots. For this case study, the capacity of each candidate node $c_j$ is the number of parking lots mapped to the node $j$.


\subsubsection{Amenity Weights}
In this case study, we consider 3 types of amenities: grocery stores, restaurants, and schools, for which the locations of existing instances are also obtained from OpenStreetMap. These 3 types are chosen because they are the major categories that the WalkScore methodology considers \cite{walk_score} and the data provided by OpenStreetMap is relatively rich for these 3 types based on our visual inspection. The weights $w_a$ (for different types) and $w_a^p$ (for different choices) are obtained from the WalkScore methodology documentation \cite{walk_score}, and the values are listed in Appendix \ref{appendix_amenity_weights}.

\subsection{Computational Setup}

We perform experiments under two scenarios. In the first  (MultiChoice), the distance to the nearest instance is considered for groceries and schools, while the distances to the top 10 nearest choices are considered for restaurants. In the second (SingleChoice), the distance to the nearest instance is considered for all 3 amenity types. MultiChoice is consistent with the original WalkScore methodology \cite{walk_score}, while the SingleChoice scenario helps assess the effect of depth of choice on walkability and solving difficulty.

\subsubsection{Instances}
Throughout the case study, we use the same upper bound for the 3 amenity types considered: $k_a=k,\forall a \in A$. We create 9 instances for each NIA with $k \in \{1,2,...,9\}$. The 31 NIAs are split into 4 groups according to the size ($|M|+|N|$). The number of NIAs (\# NIA) and the number of instances (\# Inst.) for each NIA group are shown in Table \ref{table_instances}.

\begin{table}[h]
    \centering
\resizebox{\columnwidth}{!}{
\begin{tabular}{lllllll}
\toprule
   \thead{Group} &  \thead{Network Size\\ $|M|+|N|$} &  \thead{\# NIA} & \thead{\# Inst.} & Groceries & Restaurants & Schools\\
\midrule
1 &   [0,200) &    11 &   99 &          1.45 &    5.73 &       3.18 \\
2 & [200,400) &    13 &   117 &          2.00 &   11.85 &       5.46 \\
3 & [400,600) &     4 &   36 &          4.50 &   14.50 &       9.25 \\
4 & [600,1200) &    3 &    27 &          6.67 &   19.00 &      14.33 \\
\bottomrule
\end{tabular}
}

\caption{Instance statistics. The last 3 columns are averages.}\label{table_instances}

\end{table}

\subsubsection{Setup}
All methods are implemented in Python. MILP and CP models are solved with Gurobi and CP Optimizer, respectively, and 8 threads.  Experiments were run on Intel E5-2683 v4 Broadwell at 2.1GHz CPUs and a memory limit of 32GB. Each solving run is limited to 5 hours. We use a relatively large time limit since a few hours of computation is tiny compared to the service time (in years) of an amenity. 

\section{Results}
\subsection{Comparison of Solution Methods}

\subsubsection{Solving time}
\begin{figure}[t]
  \begin{subfigure}[b]{0.565\columnwidth}         \includegraphics[width=1\linewidth]{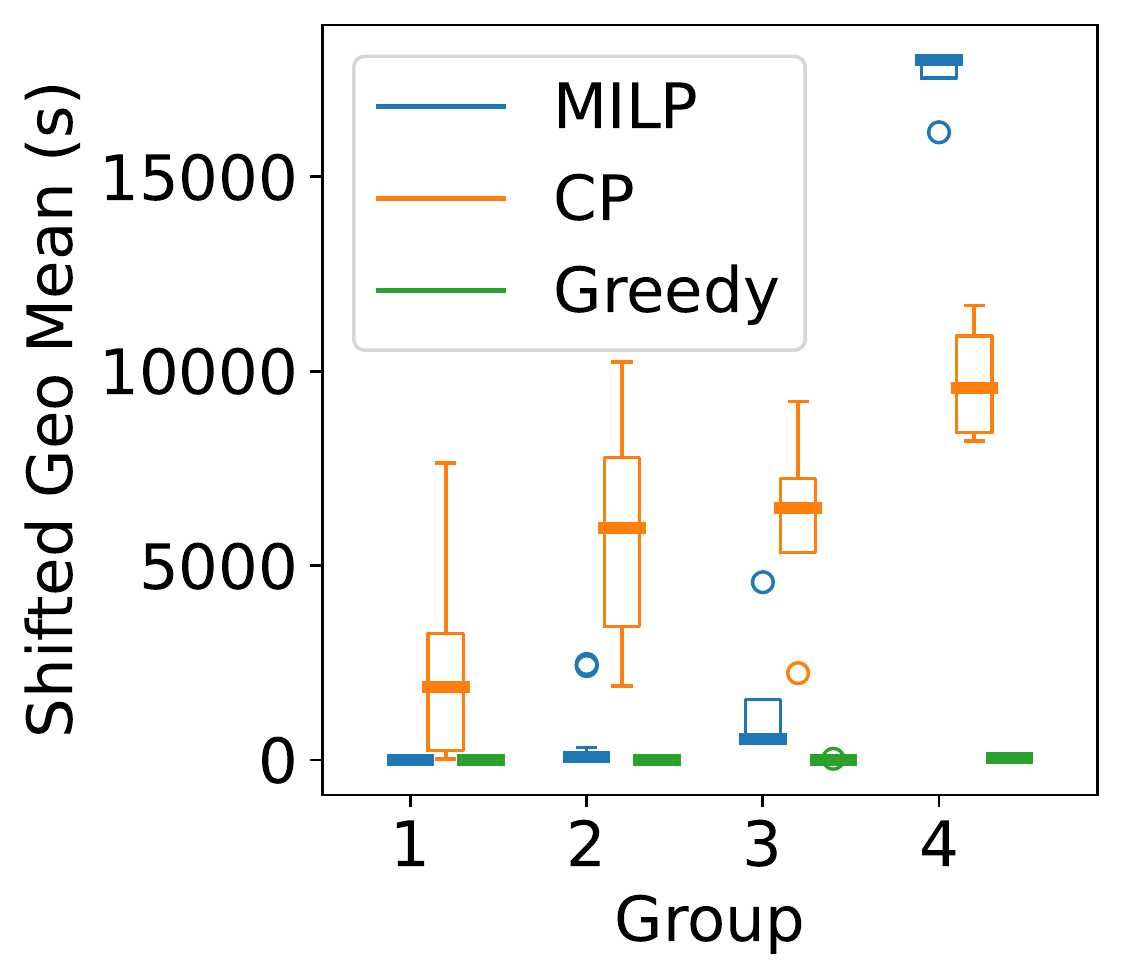}
     \end{subfigure}
     \hfill
     \begin{subfigure}[b]{0.421\columnwidth}
         \includegraphics[width=1\linewidth]{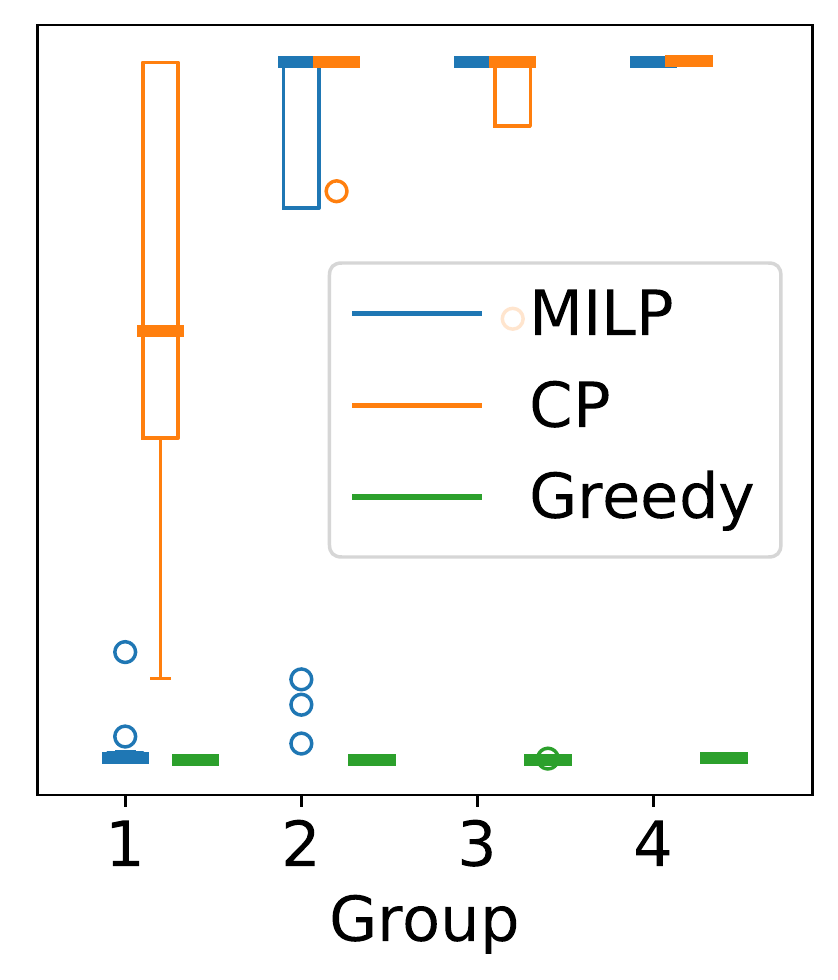}
     \end{subfigure}
  \caption{Shifted Geometric Mean in seconds across $k \in \{1,2,...,9\}$ in SingleChoice (left) \& MultiChoice (right).}
  \label{merged}
\end{figure}
We first study how the computational running times scale with increasing network size ($|M|+|N|$). For each NIA, we measure the Shifted Geometric Mean of the solving time across $k \in \{1,2,...,9\}$ (see~\cite{achterberg2007constraint}, section A.3, for a definition of this widely used summary statistic). Fig. \ref{merged} shows the results for each NIA group. Greedy is orders of magnitude faster than MILP and CP and scales well in both scenarios. The solving times for CP and MILP in the two scenarios show that the difficulty of the problem significantly increases when considering depth of choice. In SingleChoice, the solving time for MILP is shorter than CP in medium and small instances but increases rapidly for large instances, performing worse than CP in Group 4. A possible explanation is that the number of discrete variables of the CP formulation does not depend on $|M|$ or $|N|$ in this scenario, as discussed in Section \ref{sec_models}. In MultiChoice, MILP scales better than CP overall. 



\subsubsection{Solution Quality}
Next, we compare the methods in terms of three metrics: the Mean Relative Error (MRE), the number of instances for which the method found a feasible solution, and the number of instances for which the method proved optimality (Table \ref{sol_quality}). MRE measures the gap between the best solution found by a method and the best solution found across all methods, normalized by the latter. MILP has the lowest MRE for all groups in both scenarios. However, MILP struggles to find feasible solutions for large instances in MultiChoice (Group 4), while CP and Greedy find feasible solutions to all instances. The MRE of Greedy is lower than 0.7\% for all groups in both scenarios and is significantly lower than CP for medium and large instances in MultiChoice. This shows that Greedy produces high-quality solutions as a heuristic even when submodularity does not hold. 

\begin{table}[t]
\resizebox{\columnwidth}{!}{
\begin{tabular}{ll|lrl|lrl}
\toprule
 &  & \multicolumn{3}{|c|}{SingleChoice} &    \multicolumn{3}{|c}{MultiChoice}  \\
\midrule
Group & Method & MRE (\%) &  Feas & Opt &   MRE(\%) &  Feas & Opt \\
\midrule

     &    MILP &     \textbf{0.00} &          99 &        \textbf{99} &  \textbf{0.00} &       99 &     \textbf{97} \\
    1 &     CP &     \textbf{0.00} &          99 &        59 &  0.43 &       99 &     14 \\
(99) & Greedy &     0.29 &          99 &       N/A &  0.66 &       99 &    N/A \\
\midrule
     &    MILP &     \textbf{0.00} &         117 &       \textbf{116} &  \textbf{0.00} &      117 &     \textbf{29} \\
    2 &     CP &     0.01 &         117 &        31 &  3.34 &      117 &      1 \\
   (117)  & Greedy &     0.34 &         117 &       N/A &  0.51 &      117 &    N/A \\
\midrule
     &    MILP &     \textbf{0.00} &          36 &        \textbf{33} &  \textbf{0.00} &       36 &      0 \\
    3 &     CP &     0.04 &          36 &         9 & 12.41 &       36 &      \textbf{1} \\
   (36)  & Greedy &     0.38 &          36 &       N/A &  0.57 &       36 &    N/A \\
\midrule
     &    MILP &     \textbf{0.00} &          27 &         1 &  \textbf{0.05} &       21 &      0 \\
    4 &     CP &     0.91 &          27 &         \textbf{3} & 26.13 &       \textbf{27} &      0 \\
    (27) & Greedy &     0.51 &          27 &       N/A &  0.20 &       \textbf{27} &    N/A \\
\bottomrule
\end{tabular}
}
\caption{Mean relative error (MRE), number of instances for which the method found a feasible solution (Feas) and proved optimality (Opt) for each NIA group. The total number of instances in each group is shown in brackets.}\label{sol_quality}
\end{table}

\subsection{Empirical Evaluation}
What is the effect of solving~\problem~on the WalkScore and travel distances from residents to amenities, in terms of different neighborhoods and residential locations? To answer this question, we use solutions of MILP for instances where it is feasible since MILP has the lowest MRE on average. For instances where MILP did not find a feasible solution, Greedy's solutions are used. 

\subsubsection{Impact on WalkScore}
Fig.~\ref{fig_change_nia} shows the change in the average WalkScore for each NIA when additional amenities are introduced by optimization. Most NIAs lie in the mostly-industrial northwest and the suburban/rural northeast of Toronto, where infrastructure is limited. In contrast, few NIAs lie in the urban core in the south where amenities are dense, which is consistent with the reported walkability of Toronto \cite{report,report2}. The change in WalkScore varies across NIAs. Adding 3 amenities of each type improves the WalkScore by more than 50 for 4 NIAs. We observe that NIAs with low current WalkScore show greater improvement after allocation; current WalkScore for each NIA in Appendix \ref{appendix_cur_walk}.

\begin{figure}[t]
\centering
 \includegraphics[width=1\columnwidth]{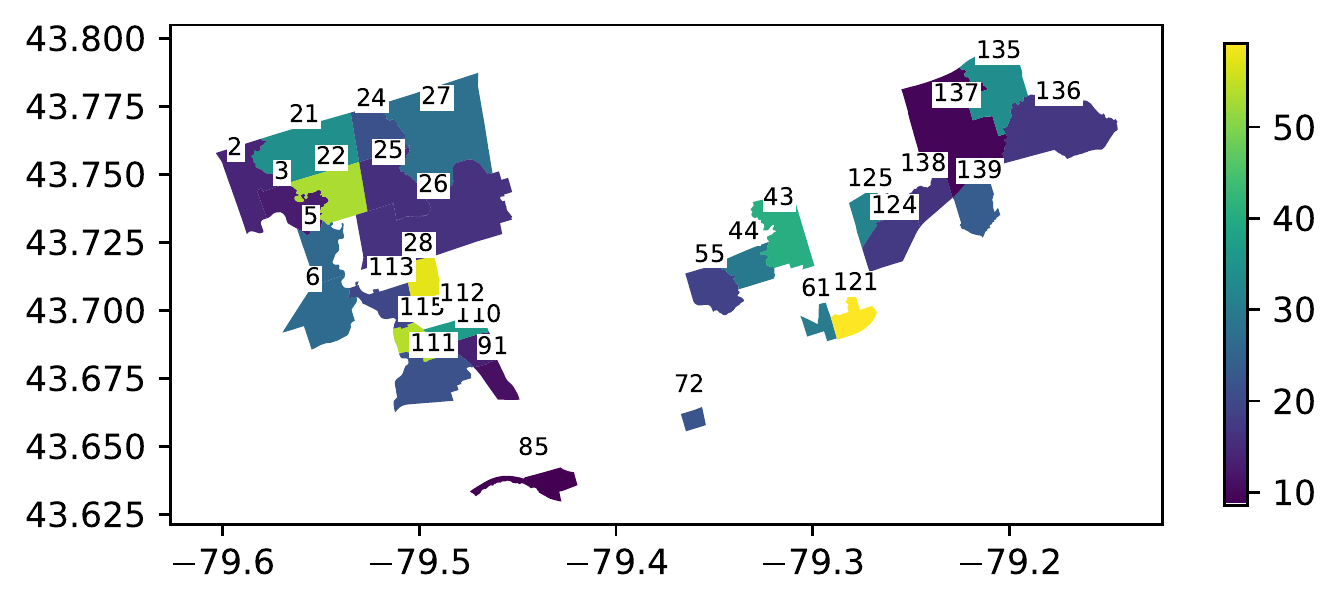}
  \caption{Change in average WalkScore with MultiChoice~\problem~($k=3$). NIAs are labeled by their IDs (neighborhood names in Appendix \ref{appendix_nia_name}).}\label{fig_change_nia}
\end{figure}


\begin{figure}[t]
  \begin{subfigure}[b]{0.49\columnwidth}
         \includegraphics[width=1\linewidth]{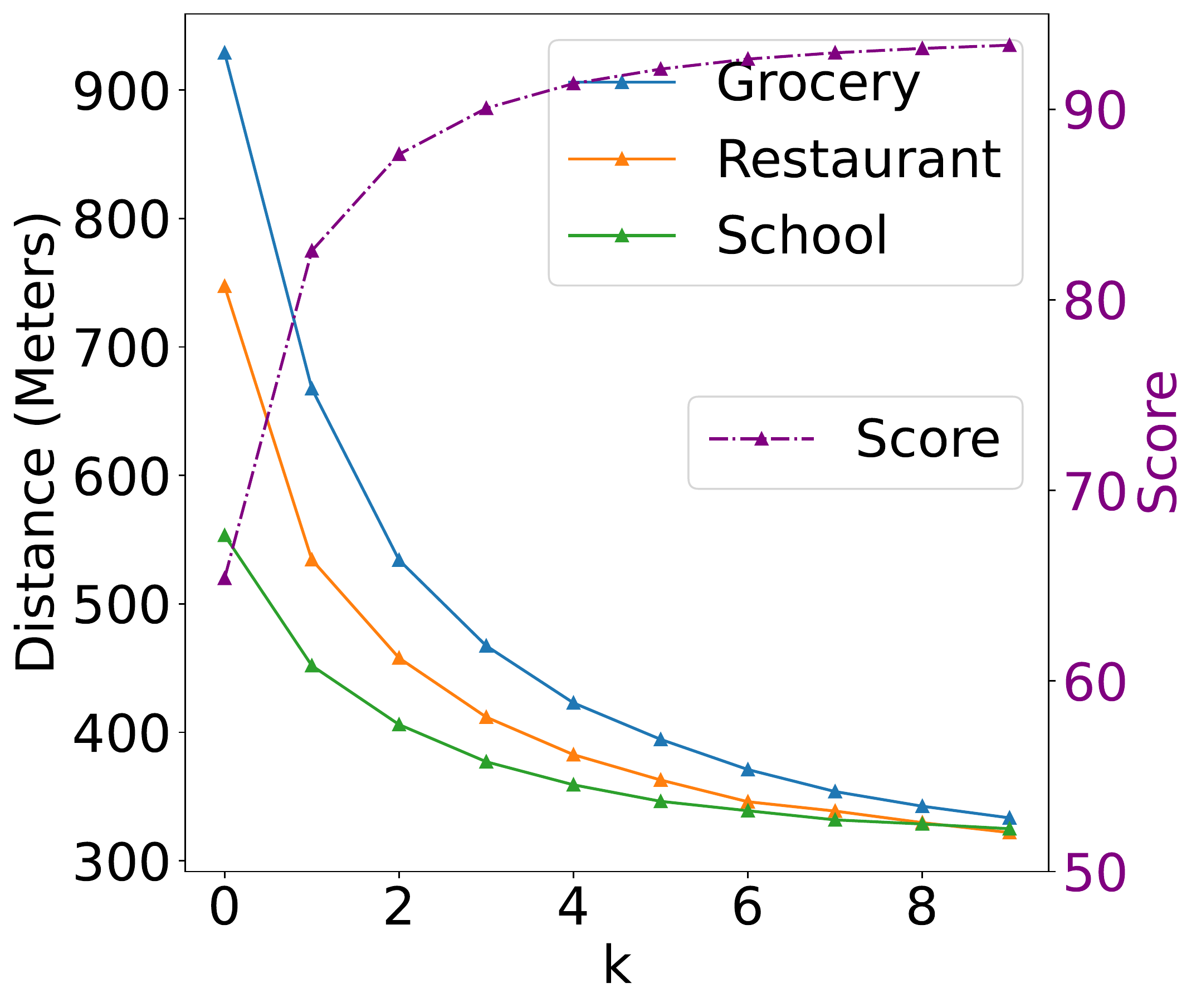}
     \end{subfigure}
     \hfill
     \begin{subfigure}[b]{0.49\columnwidth}
         \includegraphics[width=1\linewidth]{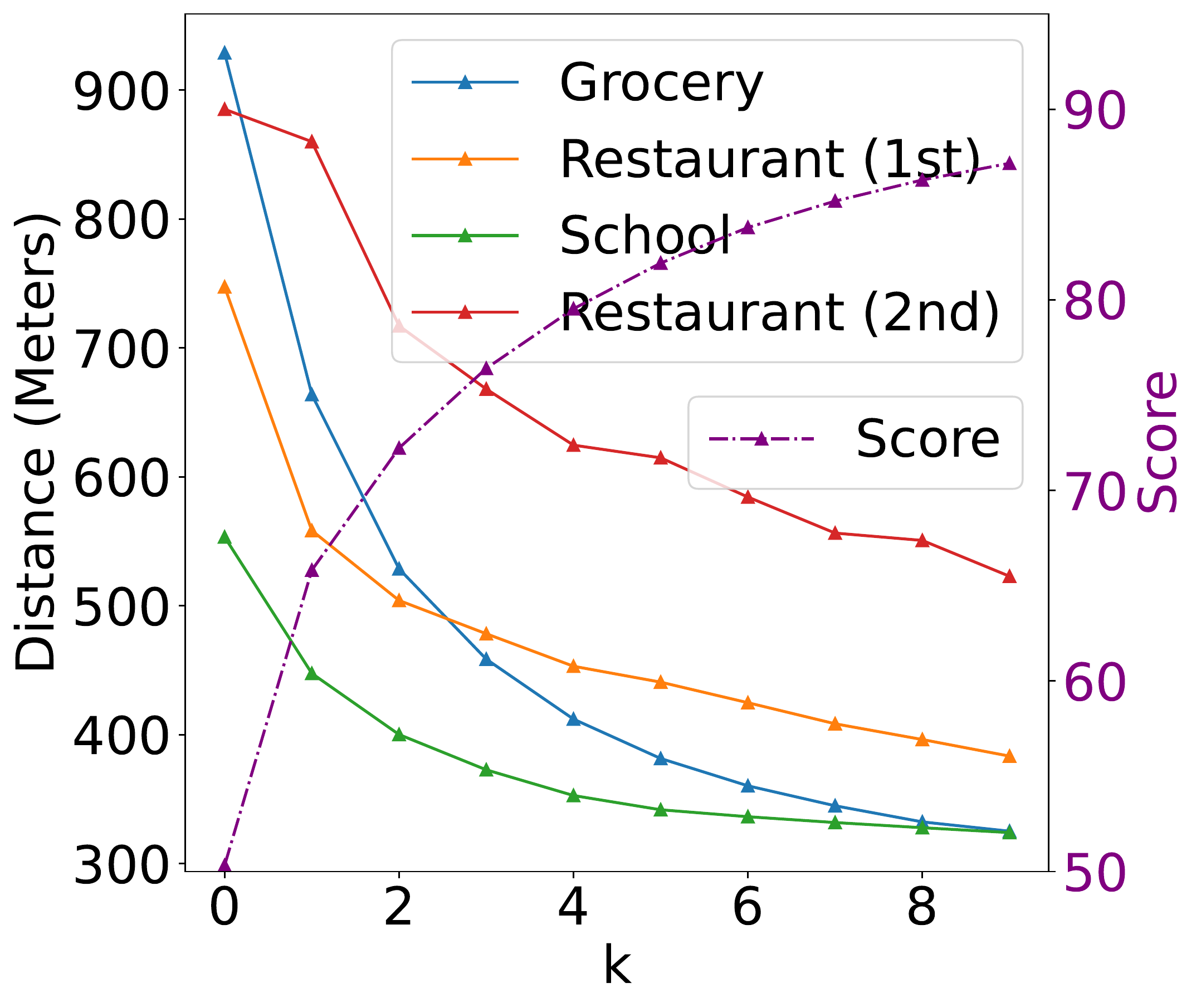}
     \end{subfigure}
  \caption{Average WalkScore and travel distances to different amenity types w.r.t. $k$ in SingleChoice (left) \& MultiChoice (right). For MultiChoice, distances to the 1st and 2nd restaurants are shown.}
  \label{obj_vs_k}

\end{figure}

Additionally, we show how the average WalkScore across all NIAs changes w.r.t. the value of $k$ (Fig. \ref{obj_vs_k}). The objective exhibits diminishing returns as $k$ increases in the SingleChoice scenario but not in MultiChoice; this agrees with the submodularity analyses of Section \ref{sec_theory}. \todo{In the MultiChoice case, setting $k=3$ increases the average objective by more than 25. } Looking at the travel distances, we see that the framework effectively reduces walking distances for all types/choices considered.

\begin{figure*}[ht]
     \centering
     \begin{subfigure}[b]{0.245\textwidth}
         \centering
         \includegraphics[width=\textwidth]{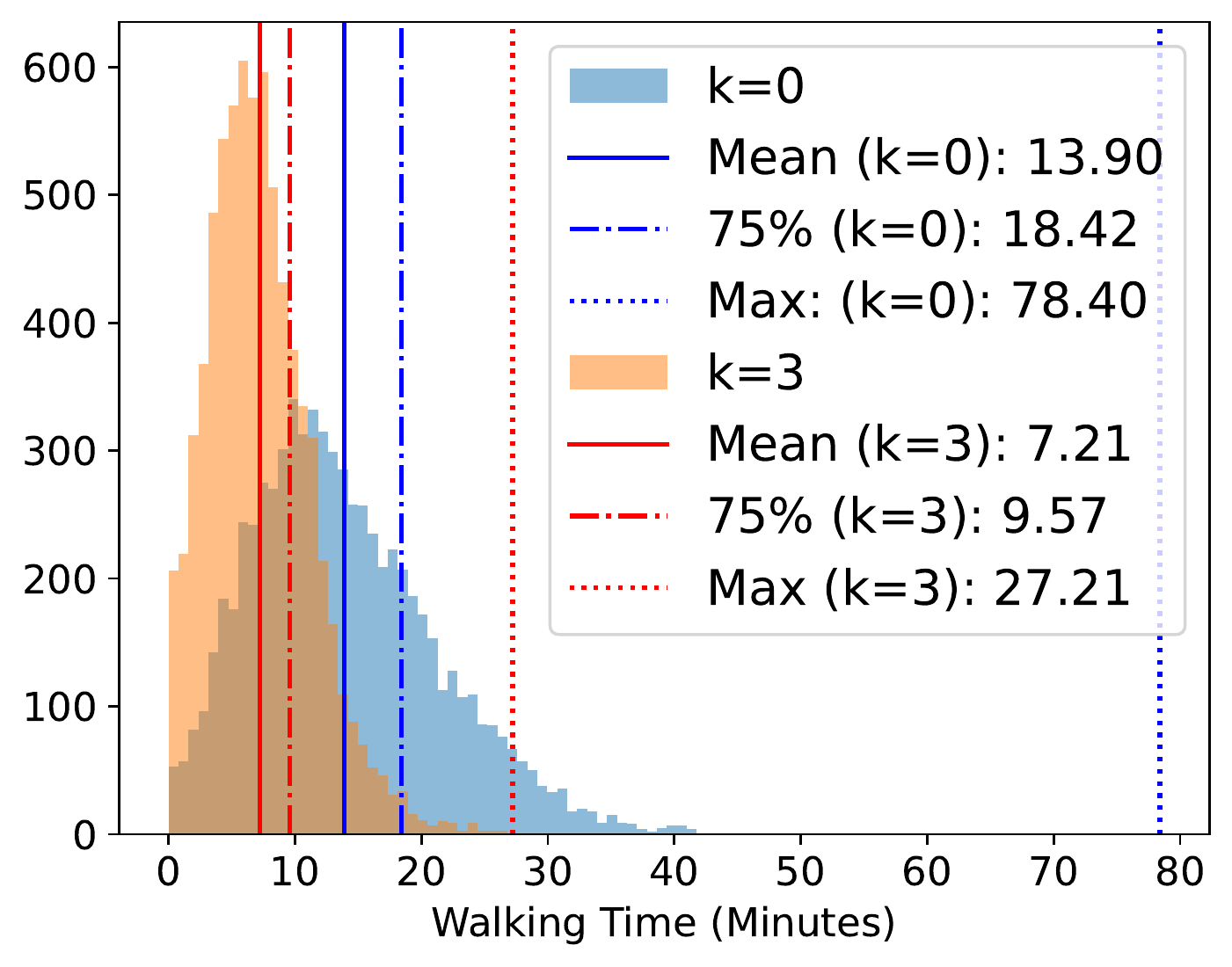}
         \caption{Grocery}
         \label{fig:y equals x}
     \end{subfigure}
     \hfill
     \begin{subfigure}[b]{0.245\textwidth}
         \centering
         \includegraphics[width=\textwidth]{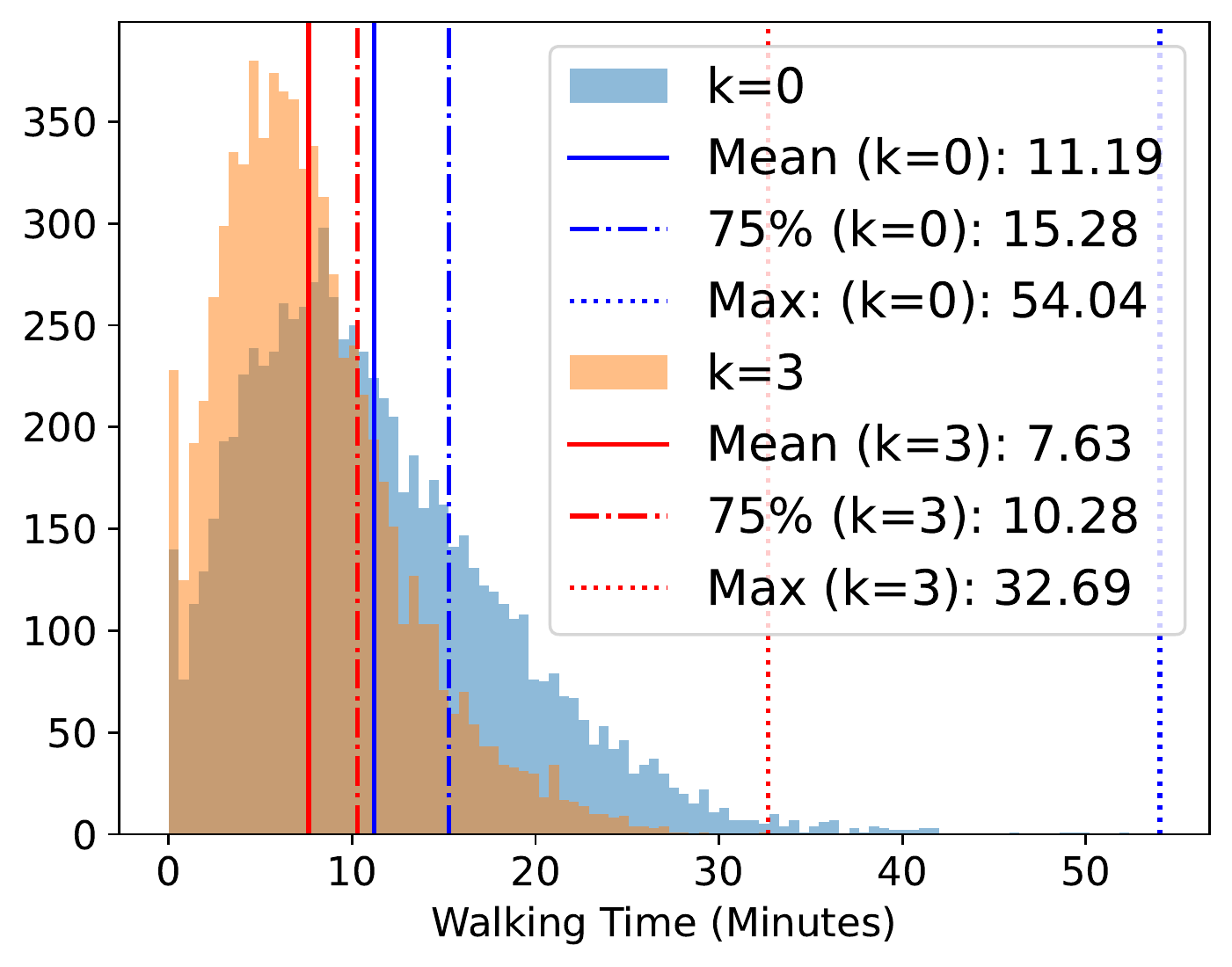}
         \caption{1st Nearest Restaurant}
         \label{fig:three sin x}
     \end{subfigure}
     \hfill
     \begin{subfigure}[b]{0.245\textwidth}
         \centering
         \includegraphics[width=\textwidth]{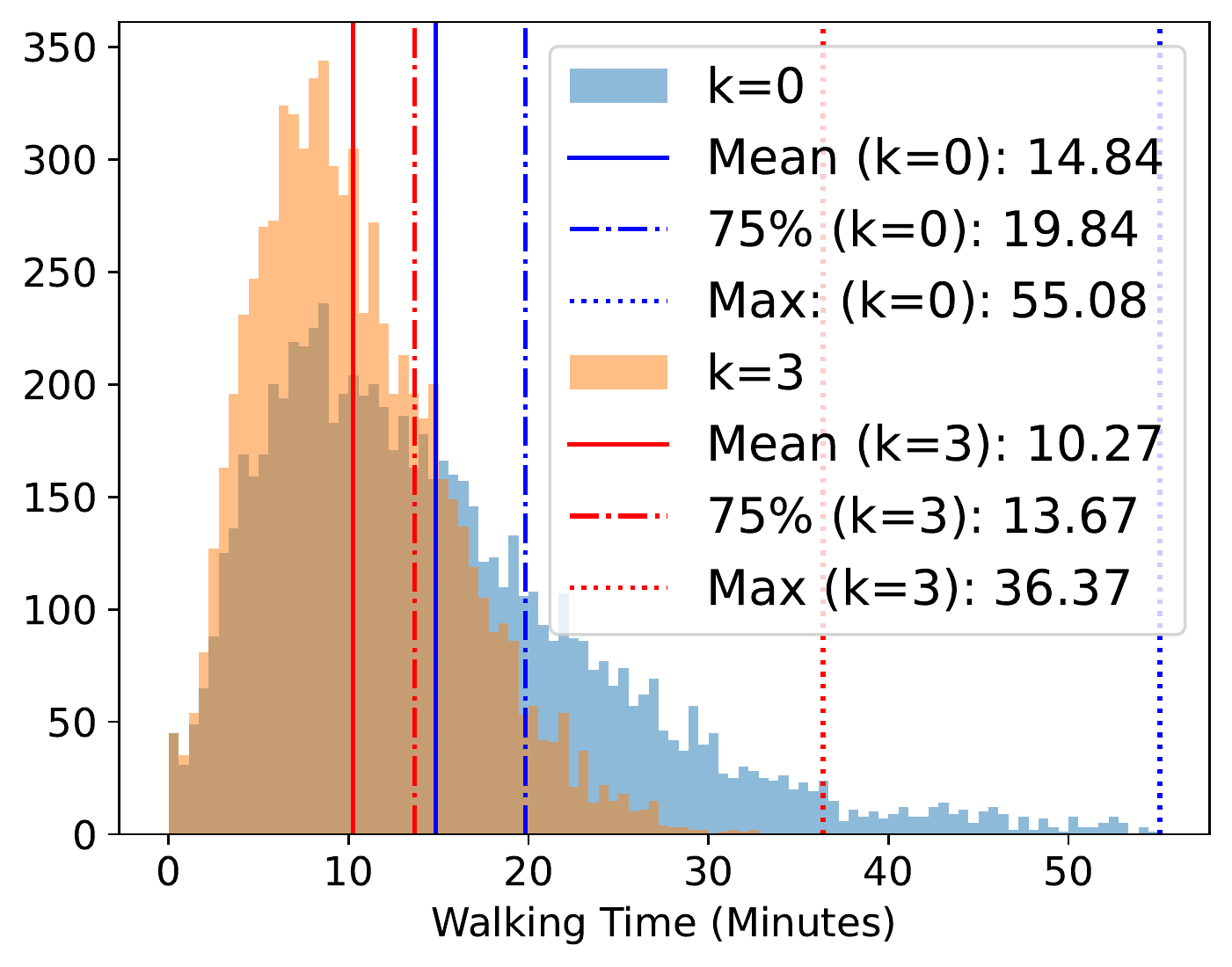}
         \caption{2nd Nearest Restaurant}
         \label{fig:five over x}
     \end{subfigure}
        \hfill
     \begin{subfigure}[b]{0.245\textwidth}
         \centering
         \includegraphics[width=\textwidth]{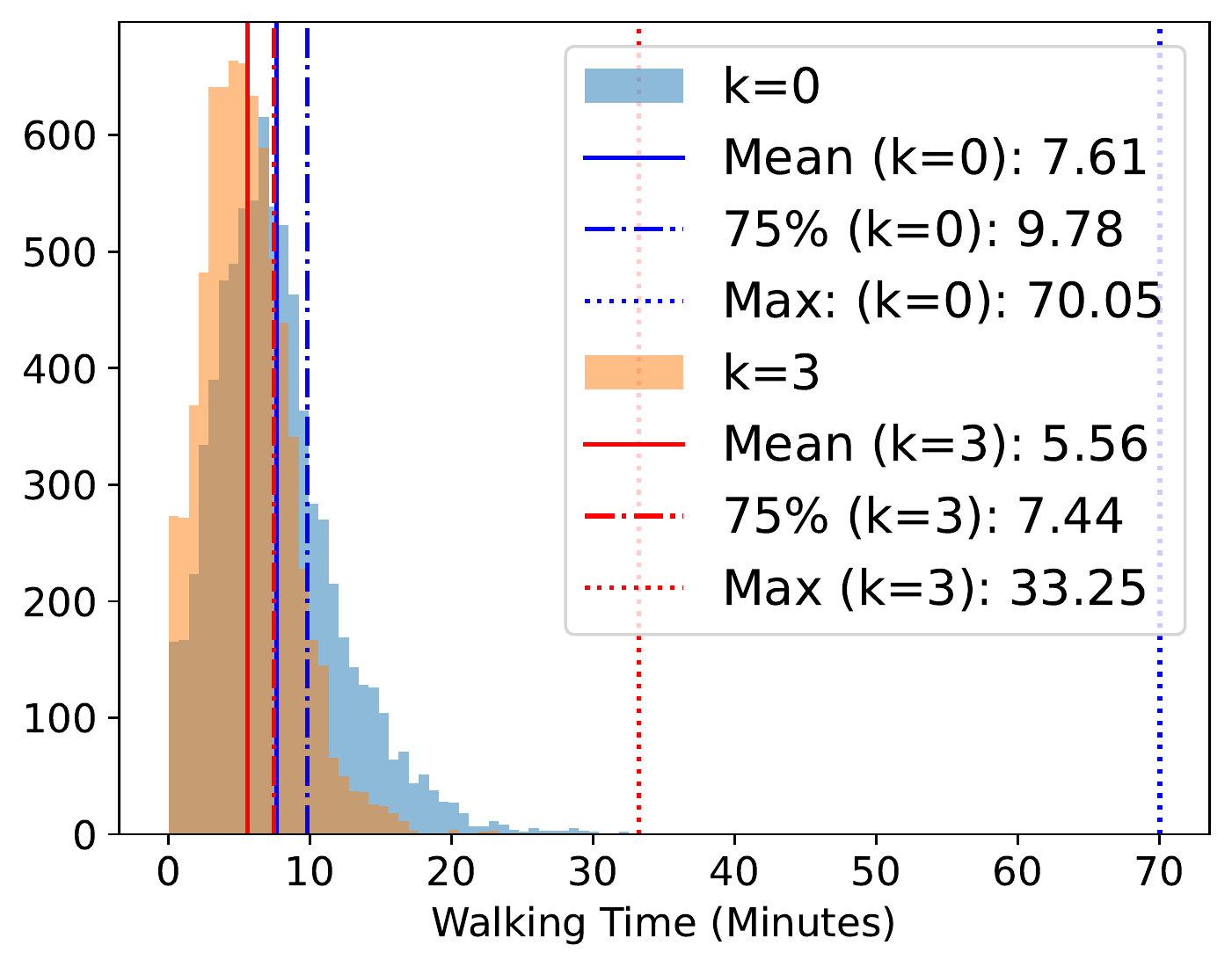}
         \caption{School}
         \label{fig:five over x}
     \end{subfigure}
     \caption{Histogram of walking times to different amenity types across all residential nodes in all neighbourhoods. Adding $k=3$ amenities shifts the histogram of walking times to the left (in orange), which also translates into smaller mean/maximum/75th percentile walking times relative to not adding any amenities (in blue).}
        \label{fig:hist}
    
\end{figure*}

\subsubsection{Individual Residential Locations}
We analyze the impact of~\problem~on individual residential locations using a histogram of walking times to the 3 amenity types for all residential nodes in all 31 NIAs (Fig. \ref{fig:hist}). Distance-to-time conversion is done using a walking speed of $1.2m/s$~\cite{walk_time}. In the MultiChoice case, an allocation with $k=3$ reduces the walking distances of the 75th percentile of all individual residential locations to 10 minutes for all amenity types. According to \citet{report}, a residential preference survey reveals that a 10-minute walking distance to stores and services characterizes a walkable neighbourhood. For grocery stores, this reduction in distance, of up to half relative to the current state, is significant. For schools, we do not observe a large improvement in the mean or the 75th percentile; most residents can walk to a school within 10 minutes currently. As schools are non-commercial, their locations may have been well-optimized historically. However, we do observe a large reduction in the \textit{maximum} walking times to schools.


\begin{figure}[t]

\centering

\includegraphics[width=\columnwidth]
{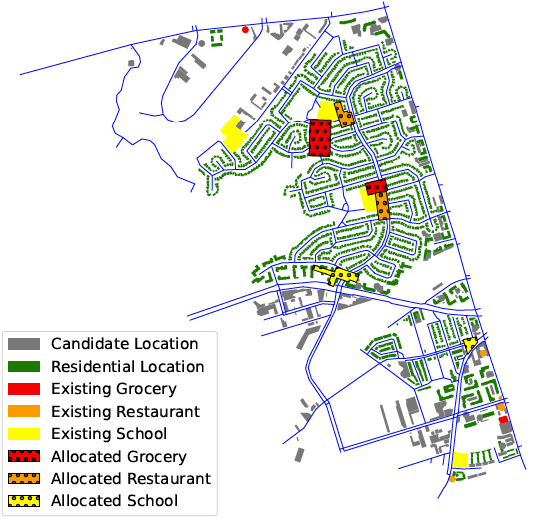}
  \caption{Allocated amenities at Victoria Village: 2 newly introduced groceries, restaurants, and schools.}\label{viz}

\end{figure}

\subsubsection{Visualization of allocated amenities}
Fig.~\ref{viz} illustrates a~\problem~solution for NIA Victoria Village along with existing residential locations, candidate allocation locations, and existing amenities. Allocated amenities seem to fall at the heart of residential clusters. Perhaps more interestingly, some newly allocated amenities are very close to the locations of other types of amenities (existing or allocated) and seem to form an urban center with a mix of different types.

\section{Related Work}
\subsection{Walkability Optimization}
The Introduction already discusses some of the most relevant work that uses genetic algorithms. To further elaborate, \citet{gen1} optimizes for each different amenity type independently, which may hinder optimality. Moreover, existing works suffer from unrealistic assumptions that limit the applicability of the framework and the quality of empirical evaluation such as designing street grid patterns from scratch \cite{Lima2022AGO} and allocating amenities to empty street layout~\cite{gen2}. \citet{gen3} performs a case study in an existing city but only uses randomly sampled residential units and ignores current amenity locations, generating solutions that override existing infrastructure. 


\subsection{$k$-Median and Facility Location Problem (FLP)}
Compared to the FLP, \problem~considers multiple facility types and depth of choice. Without these, \problem~is equivalent to the Submodular FLP defined in \cite{submodular_flp_2}, and previous work has shown that objective~\eqref{walkscore_obj} is submodular in this case \cite{submodular_flp_1}. 

Moreover, the objective function WalkScore does not satisfy the properties of a metric space, in contrast to the closely related $k$-median problem. Algorithms for $k$-median that provide better approximation ratios than standard greedy include reverse greedy \cite{Chrobak2005TheRG}, local search \cite{Arya2001LocalSH}, LP relaxation \cite{lp1,Charikar2012ADL}, and Lagrangian relaxation \cite{10.1145/375827.375845,Jain2003GreedyFL}. However, these algorithms assume that the objective function is defined on a metric space and are not applicable to our problem.


\section{Conclusion and Discussion}
Automobile-reliant communities with limited access to amenities in their vicinity have a great potential for transformation into more walkable and sustainable neigbhbourhoods. We formulate the problem of Walkability Optimization where amenities are introduced at strategic locations to improve the proximity to residents. Our \problem~formulation realistically models residents' behaviour by integrating multiple amenity types, depth of choice, and an objective function representing the proximity to amenities. We also take into account existing amenities in the context of built cities. We provide MILP and CP formulations and an efficient greedy algorithm motivated by the submodular structure of the~\problem~objective (without depth of choice). An experimental evaluation on high-quality data from Toronto shows that MILP and Greedy are effective at producing near-optimal solutions, with a scalability advantage for the latter. Our framework produces solutions that significantly improve the walkability in underserved neighbourhoods on average and reduce the walking distances for individual residential locations.

While we have prioritized incorporating realistic facets of walkability optimization into our formulation, more can potentially be done by: considering the population at each residential location, the construction cost at each candidate allocation location, the area/size of candidate locations, and the service capacity of amenities. If the data is available, these can be easily integrated into the objective and constraints. In addition, \problem~quantifies walkability in terms of travel distances without considering other factors that may affect accessibility such as the safety/quality of walking paths, which can potentially be incorporated into the formulation by applying penalties appropriately. Lastly, our experiments were based on neighbourhood-scale instances; testing our methods at full city-scale might be of future interest.


\clearpage
\bibliography{aaai23}

\begin{thebibliography}{45}
\providecommand{\natexlab}[1]{#1}

\bibitem[{ACHTERBERG(2007)}]{achterberg2007constraint}
ACHTERBERG, T. 2007.
\newblock Constraint Integer Programming.
\newblock \emph{Ph. D. Thesis, Technische Universitat Berlin}.

\bibitem[{Arya et~al.(2001)Arya, Garg, Khandekar, Meyerson, Munagala, and
  Pandit}]{Arya2001LocalSH}
Arya, V.; Garg, N.; Khandekar, R.; Meyerson, A.; Munagala, K.; and Pandit, V.
  2001.
\newblock Local search heuristic for k-median and facility location problems.
\newblock In \emph{STOC '01}.

\bibitem[{Brand et~al.(2021)Brand, Götschi, Dons, Gerike, Anaya-Boig,
  Avila-Palencia, {de Nazelle}, Gascon, Gaupp-Berghausen, Iacorossi, Kahlmeier,
  {Int Panis}, Racioppi, Rojas-Rueda, Standaert, Stigell, Sulikova, Wegener,
  and Nieuwenhuijsen}]{BRAND2021102224}
Brand, C.; Götschi, T.; Dons, E.; Gerike, R.; Anaya-Boig, E.; Avila-Palencia,
  I.; {de Nazelle}, A.; Gascon, M.; Gaupp-Berghausen, M.; Iacorossi, F.;
  Kahlmeier, S.; {Int Panis}, L.; Racioppi, F.; Rojas-Rueda, D.; Standaert, A.;
  Stigell, E.; Sulikova, S.; Wegener, S.; and Nieuwenhuijsen, M.~J. 2021.
\newblock The climate change mitigation impacts of active travel: Evidence from
  a longitudinal panel study in seven European cities.
\newblock \emph{Global Environmental Change}, 67: 102224.

\bibitem[{Charikar et~al.(1999)Charikar, Guha, Tardos, and Shmoys}]{lp1}
Charikar, M.; Guha, S.; Tardos, E.; and Shmoys, D. 1999.
\newblock A constant-factor approximation algorithm for the $k$-median problem.
\newblock \emph{Proceedings of 31st Annual ACM Symposium on Theory of
  Computing}, 1--10.

\bibitem[{Charikar and Li(2012)}]{Charikar2012ADL}
Charikar, M.; and Li, S. 2012.
\newblock A Dependent LP-Rounding Approach for the k-Median Problem.
\newblock In \emph{ICALP}.

\bibitem[{Chrobak, Mathieu, and Young(2005)}]{Chrobak2005TheRG}
Chrobak, M.; Mathieu, C.; and Young, N. 2005.
\newblock The reverse greedy algorithm for the metric k-median problem.
\newblock \emph{Inf. Process. Lett.}, 97: 68--72.

\bibitem[{Cichocka(2015)}]{gen1}
Cichocka, J.~M. 2015.
\newblock Generative design optimization in urban planning –
  walkability-optimized city concept.

\bibitem[{{City of Toronto}(2019{\natexlab{a}})}]{data_pednet_git}
{City of Toronto}. 2019{\natexlab{a}}.
\newblock Toronto Walkability Project.
\newblock \url{https://github.com/gcc-dav-official-github/dav_cot_walkability}.
\newblock Accessed: 2022-07-17.

\bibitem[{{City of Toronto}(2019{\natexlab{b}})}]{report2}
{City of Toronto}. 2019{\natexlab{b}}.
\newblock {Walkability Report}.
\newblock
  \url{https://ckan0.cf.opendata.inter.prod-toronto.ca/dataset/4b5c7a84-dea1-4137-875d-71d7f662c83f/resource/d546f5d1-ecc2-4237-bcc9-84000715b60a/download/Toronto%20Walkability%20Analysis.pdf}.
\newblock Accessed: 2022-08-01.

\bibitem[{{City of Toronto}(2022)}]{NIA}
{City of Toronto}. 2022.
\newblock Neighbourhood Improvement Area Profiles.
\newblock
  \url{https://www.toronto.ca/city-government/data-research-maps/neighbourhoods-communities/neighbourhood-profiles/nia-profiles/}.
\newblock Accessed: 2022-07-17.

\bibitem[{Creutzig et~al.(2018)Creutzig, Roy, Lamb, Azevedo, de~Bruin,
  Dalkmann, Edelenbosch, Geels, Grubler, Hepburn, Hertwich, Khosla, Mattauch,
  Minx, Ramakrishnan, Rao, Steinberger, Tavoni, {\"u}rge Vorsatz, and
  Weber}]{Creutzig2018TowardsDS}
Creutzig, F.; Roy, J.; Lamb, W.~F.; Azevedo, I. M.~L.; de~Bruin, W.~B.;
  Dalkmann, H.; Edelenbosch, O.~Y.; Geels, F.~W.; Grubler, A.; Hepburn, C.;
  Hertwich, E.~G.; Khosla, R.; Mattauch, L.; Minx, J.~C.; Ramakrishnan, A.;
  Rao, N.~D.; Steinberger, J.~K.; Tavoni, M.; {\"u}rge Vorsatz, D.; and Weber,
  E.~U. 2018.
\newblock Towards demand-side solutions for mitigating climate change.
\newblock \emph{Nature Climate Change}, 8: 260--263.

\bibitem[{Duncan et~al.(2011)Duncan, Aldstadt, Whalen, Melly, and
  Gortmaker}]{walk_measure_1}
Duncan, D.~T.; Aldstadt, J.; Whalen, J.; Melly, S.; and Gortmaker, S.~L. 2011.
\newblock Validation of Walk Score{\textregistered} for Estimating Neighborhood
  Walkability: An Analysis of Four US Metropolitan Areas.
\newblock \emph{International Journal of Environmental Research and Public
  Health}, 8: 4160 -- 4179.

\bibitem[{{Eric Reguly}(2020)}]{parking_2}
{Eric Reguly}. 2020.
\newblock Bikes, pedestrians and the 15-minute city: How the pandemic is
  propelling urban revolutions.
\newblock
  \url{https://www.theglobeandmail.com/canada/article-bikes-pedestrians-and-the-15-minute-city-how-the-pandemic-is/?_sp=64b0e81e-9061-4f86-94b9-c74d400683e3.1619052392166}.
\newblock Accessed: 2022-07-17.

\bibitem[{Fischel(2003)}]{Fischel2003AnEH}
Fischel, W.~A. 2003.
\newblock An Economic History of Zoning and a Cure for its Exclusionary
  Effects.
\newblock \emph{Urban Studies}, 41: 317 -- 340.

\bibitem[{Frieze(1974)}]{submodular_flp_1}
Frieze, A.~M. 1974.
\newblock A cost function property for plant location problems.
\newblock \emph{Mathematical Programming}, 7: 245--248.

\bibitem[{{Gurobi Optimization}(2022)}]{grp_wpl}
{Gurobi Optimization}. 2022.
\newblock {Model.addGenConstrPWL()}.
\newblock
  \url{https://www.gurobi.com/documentation/9.1/refman/py_model_agc_pwl.html}.
\newblock Accessed: 2022-08-14.

\bibitem[{Indraprastha(2019)}]{Indraprastha2019InformedWC}
Indraprastha, A. 2019.
\newblock Informed Walkable City Model - Developing A Multi-Objective
  Optimization Model for Evaluating Walkability Concept.
\newblock \emph{Proceedings of the 24th Conference on Computer Aided
  Architectural Design Research in Asia (CAADRIA) [Volume 2]}.

\bibitem[{Jain and Vazirani(2001)}]{10.1145/375827.375845}
Jain, K.; and Vazirani, V.~V. 2001.
\newblock Approximation Algorithms for Metric Facility Location and K-Median
  Problems Using the Primal-Dual Schema and Lagrangian Relaxation.
\newblock \emph{J. ACM}, 48(2): 274–296.

\bibitem[{Jain et~al.(2003)Jain, Mahdian, Markakis, Saberi, and
  Vazirani}]{Jain2003GreedyFL}
Jain, K.~K.; Mahdian, M.; Markakis, E.~K.; Saberi, A.; and Vazirani, V.~V.
  2003.
\newblock Greedy facility location algorithms analyzed using dual fitting with
  factor-revealing LP.
\newblock \emph{ArXiv}, cs.DS/0207028.

\bibitem[{Kim, Kim, and Oh(1997)}]{dis3}
Kim, B.~M.; Kim, Y.~B.; and Oh, C.~H. 1997.
\newblock A study on the convergence of genetic algorithms.
\newblock \emph{Computers \& Industrial Engineering}, 33: 581--588.

\bibitem[{Lee and Moudon(2006)}]{grocery_weight}
Lee, C.; and Moudon, A.~V. 2006.
\newblock Correlates of walking for transportation or recreation purposes.
\newblock \emph{Journal of Physical Activity and health}, 3(s1): S77--S98.

\bibitem[{Levine(2005)}]{Levine2005ZonedOR}
Levine, J. 2005.
\newblock Zoned Out: Regulation, Markets, and Choices in Transportation and
  Metropolitan Land Use.

\bibitem[{Lima, Brown, and Duarte(2022)}]{Lima2022AGO}
Lima, F.~T.; Brown, N.~C.; and Duarte, J.~P. 2022.
\newblock A grammar-based optimization approach for walkable urban fabrics
  considering pedestrian accessibility and infrastructure cost.
\newblock \emph{Environment and Planning B: Urban Analytics and City Science},
  49: 1489 -- 1506.

\bibitem[{Lindgren, Wu, and Dimakis(2015)}]{submodular_flp_2}
Lindgren, E.~M.; Wu, S.; and Dimakis, A.~G. 2015.
\newblock Sparse and Greedy: Sparsifying Submodular Facility Location Problems.

\bibitem[{Massey(1990)}]{Massey1990AmericanAS}
Massey, D.~S. 1990.
\newblock American Apartheid: Segregation and the Making of the Underclass.
\newblock \emph{American Journal of Sociology}, 96: 329 -- 357.

\bibitem[{McIntosh et~al.(2014)McIntosh, Trubka, Kenworthy, and
  Newman}]{MCINTOSH201495}
McIntosh, J.; Trubka, R.; Kenworthy, J.; and Newman, P. 2014.
\newblock The role of urban form and transit in city car dependence: Analysis
  of 26 global cities from 1960 to 2000.
\newblock \emph{Transportation Research Part D: Transport and Environment}, 33:
  95--110.

\bibitem[{Mouratidis(2021)}]{MOURATIDIS2021105772}
Mouratidis, K. 2021.
\newblock How COVID-19 reshaped quality of life in cities: A synthesis and
  implications for urban planning.
\newblock \emph{Land Use Policy}, 111: 105772.

\bibitem[{Nagy, Villaggi, and Benjamin(2018)}]{gen4}
Nagy, D.; Villaggi, L.; and Benjamin, D. 2018.
\newblock Generative urban design: integrating financial and energy goals for
  automated neighborhood layout.

\bibitem[{{Natalie Marchant}(2020)}]{parking_1}
{Natalie Marchant}. 2020.
\newblock Paris halves street parking and asks residents what they want to do
  with the space.
\newblock
  \url{https://www.weforum.org/agenda/2020/12/paris-parking-spaces-greenery-cities/}.
\newblock Accessed: 2022-07-17.

\bibitem[{Nemhauser, Wolsey, and Fisher(1978)}]{Nemhauser1978AnAO}
Nemhauser, G.~L.; Wolsey, L.~A.; and Fisher, M.~L. 1978.
\newblock An analysis of approximations for maximizing submodular set
  functions—I.
\newblock \emph{Mathematical Programming}, 14: 265--294.

\bibitem[{Neves and Brand(2019)}]{Neves2019AssessingTP}
Neves, A.; and Brand, C. 2019.
\newblock Assessing the potential for carbon emissions savings from replacing
  short car trips with walking and cycling using a mixed GPS-travel diary
  approach.
\newblock \emph{Transportation Research Part A: Policy and Practice}.

\bibitem[{Ngueveu(2019)}]{Ngueveu2019PiecewiseLB}
Ngueveu, S.~U. 2019.
\newblock Piecewise linear bounding of univariate nonlinear functions and
  resulting mixed integer linear programming-based solution methods.
\newblock \emph{Eur. J. Oper. Res.}, 275: 1058--1071.

\bibitem[{{OpenStreetMap contributors}(2017)}]{OpenStreetMap}
{OpenStreetMap contributors}. 2017.
\newblock {Planet dump retrieved from https://planet.osm.org }.
\newblock \url{ https://www.openstreetmap.org }.
\newblock Accessed: 2022-07-17.

\bibitem[{Rakha and Reinhart(2012)}]{gen2}
Rakha, T.; and Reinhart, C.~F. 2012.
\newblock GENERATIVE URBAN MODELING: A DESIGN WORK FLOW FOR
  WALKABILITY-OPTIMIZED CITIES.

\bibitem[{{Social Development, Finance and Administration, City of
  Toronto}(2019)}]{data_NIA}
{Social Development, Finance and Administration, City of Toronto}. 2019.
\newblock About Neighbourhood Improvement Areas.
\newblock
  \url{https://open.toronto.ca/dataset/neighbourhood-improvement-areas/}.
\newblock Accessed: 2022-07-17.

\bibitem[{{Social Policy Analysis and Research, City of
  Toronto}(2014)}]{TSNS2020}
{Social Policy Analysis and Research, City of Toronto}. 2014.
\newblock TSNS 2020 NEIGHBOURHOOD EQUITY INDEX Methodological Documentation.
\newblock
  \url{https://www.toronto.ca/legdocs/mmis/2014/cd/bgrd/backgroundfile-67350.pdf}.
\newblock Accessed: 2022-07-17.

\bibitem[{Sonta and Jain(2019)}]{gen3}
Sonta, A.~J.; and Jain, R.~K. 2019.
\newblock Optimizing Neighborhood-Scale Walkability.
\newblock \emph{Computing in Civil Engineering 2019}.

\bibitem[{{Toronto Public Health}(2012)}]{report}
{Toronto Public Health}. 2012.
\newblock {The Walkable City: Neighbourhood Design and Preferences, Travel
  Choices and Health.}
\newblock
  \url{https://www.toronto.ca/wp-content/uploads/2017/10/9617-TPH-walkable-city-report.pdf}.
\newblock Accessed: 2022-08-01.

\bibitem[{{Traffic Management, City of Toronto}(2018)}]{walk_time}
{Traffic Management, City of Toronto}. 2018.
\newblock {Pedestrian Timing at Signalised Intersections}.
\newblock
  \url{https://www.toronto.ca/wp-content/uploads/2019/01/96a8-TS_Traffic-Management_Pedestrian-Timing-Guidelines.pdf}.
\newblock Accessed: 2022-07-17.

\bibitem[{Verbas et~al.(2015)Verbas, Frei, Mahmassani, and
  Chan}]{Verbas2015StretchingRS}
Verbas, I.~{\"O}.; Frei, C.; Mahmassani, H.~S.; and Chan, R. 2015.
\newblock Stretching resources: sensitivity of optimal bus frequency allocation
  to stop-level demand elasticities.
\newblock \emph{Public Transport}, 7: 1--20.

\bibitem[{{Walk Score}(2011)}]{walk_score}
{Walk Score}. 2011.
\newblock Walk Score Methodology.
\newblock
  \url{http://pubs.cedeus.cl/omeka/files/original/b6fa690993d59007784a7a26804d42be.pdf}.
\newblock Accessed: 2022-07-17.

\bibitem[{Whittle(2020)}]{whittle2020welcome}
Whittle, N. 2020.
\newblock Welcome to the 15-minute city.
\newblock \emph{Financial Times}, 17.

\bibitem[{Yang(2020)}]{dis1}
Yang, X.-S. 2020.
\newblock Nature-Inspired Optimization Algorithms.

\bibitem[{Yang, Samaranayake, and Dogan(2020)}]{qualitative}
Yang, Y.; Samaranayake, S.; and Dogan, T. 2020.
\newblock An Adaptive Workflow to Generate Street Network and Amenity
  Allocation for Walkable Neighborhood Design.

\bibitem[{Zapata-Diomedi et~al.(2019)Zapata-Diomedi, Boulang{\'e}, Giles-Corti,
  Phelan, Washington, Veerman, and Gunn}]{ZapataDiomedi2019PhysicalAH}
Zapata-Diomedi, B.; Boulang{\'e}, C.~L.; Giles-Corti, B.; Phelan, K.;
  Washington, S.; Veerman, J.~L.; and Gunn, L.~D. 2019.
\newblock Physical activity-related health and economic benefits of building
  walkable neighbourhoods: a modelled comparison between brownfield and
  greenfield developments.
\newblock \emph{The International Journal of Behavioral Nutrition and Physical
  Activity}, 16.

\end{thebibliography}

\appendix

\clearpage
\section{Proof of NP-Completeness}\label{appendix_np_compleness}

We prove that this problem is not solvable in polynomial time assuming that P$\neq$ NP. Consider a simplified version of our problem that considers only one type of amenity $a\in A^{plain}$ with capacity $c_j=1$ for each candidate allocation location $j \in M$. Additionally, we replace the WalkScore $f()$ with an affine objective function $f(x)=-rx + b$, where $r>0,b>0$. The decision version of our problem is: given a set $N$ of residential locations, a set $M$ of candidate locations, and a set of existing locations $L$,  does there exist a set $S \subseteq M$ with $|S|\leq k$ such that $\sum_{i \in N}f(\min_{j \in S\cup L} d_{ij}) \geq t_1$, for arbitrary $t_1$?

First, the problem is in NP.
\begin{itemize}
    \item Input: Set of residential locations $N$, set of potential allocation locations $M$, set of existing locations $L$, pairwise distances $d_{ij}$ for $i \in N,j \in M \cup L$, a positive integer $k$, a threshold $t_1$, and the function $f()$
    \item Evidence: a subset of nodes $S \in M$
    \item Requirement: $\sum_{i \in N}f(D_i) \geq t_1$ and $|S| \leq k$, where $D_i = \min_{j \in S \cup L}d_{ij}$
    \item Algorithm for checking: For each $i \in N$, compute $f_i^*=\operatorname*{argmax}_{j \in S \cup L} f(d_{ij})$. Then, compute $\sum_{i \in N}f_i^*$.
\end{itemize}
Then, we show that the problem is NP-hard by reducing the $k$-median problem to our problem. The decision version of the $k$-median problem is the following: Given a metric space $(F \cup C,\rho)$  where $F$ is a set of facility locations, $C$ is a set of clients and $\rho()$ is a distance function in the metric space, does there exist a $k$-element set $S \in F$ of open facilities such that $\sum_{i \in C}\min_{j \in S}\rho(i,j)  \leq t_2$? 

Given an instance of the $k$-median problem with $F$, $C$, distance function $\rho()$, threshold $t$, and cardinality constraint $k$, we construct an instance of our problem with a set of residential nodes $N=C$ and a set of potential allocation nodes $M=F$ with distances $d_{ij}=\rho(i,j), \; \forall i \in N,j \in M$. We add a single existing amenity $L=\{q\}$ such that $d_{iq}>\max_{j \in M}d_{ij},\forall i \in N$. This reduction can be performed in polynomial time, as desired.

We show that $S$ with $|S|=k$ satisfies $\sum_{i \in C}\min_{j \in S}\rho(i,j)  \leq t$ in the $k$-median problem if and only if $S$ satisfies $\sum_{i \in N}f(\min_{j \in S\cup L} d_{ij}) \geq (-tr + b|C|)$ in our problem.

Since we constructed the instance such that the distances to existing amenity $L=\{q\}$ are greater than any candidate allocation location, we have 
\begin{equation*}
    \min_{j \in S\cup L} d_{ij}=\min_{j \in S} d_{ij}, \forall i \in N.
\end{equation*}
And because of the way we define the distances in the instance of our problem, we have
\begin{equation*}
    \min_{j \in S} d_{ij}=\min_{j \in S}\rho(i,j), \forall i \in N.
\end{equation*}
Then the following are equivalent:

\begin{equation*}
\begin{split}
    &\sum_{i \in C}\min_{j \in S}\rho(i,j) \leq t \\
    &\leftrightarrow \sum_{i \in C}(-r\min_{j \in S}\rho(i,j)+b) \geq -tr + b|C|\\ 
    &\leftrightarrow \sum_{i \in N}(-r\min_{j \in S} d_{ij}+b) \geq -tr + b|C|\\ 
    &\leftrightarrow \sum_{i \in N}f(\min_{j \in S} d_{ij}) \geq -tr + b|C|\\ 
    &\leftrightarrow \sum_{i \in N}f(\min_{j \in S \cup L} d_{ij}) \geq -tr + b|C|.
\end{split}
\end{equation*}

In words, a feasible solution to the $k$-median instance is feasible in the corresponding instance of our problem, and vice versa. Combined with the fact that our problem is in NP and a polynomial-time reduction is possible, we conclude that the decision version of our problem is NP-Complete. 




\section{No Submodularity with Depth of Choice: a Counter Example}\label{appendix_counter}
For the case with depth of choice, we provide a counter-example to show that submodularity does not hold.

We consider the weighted distances to three amenity types: grocery stores, restaurants, and schools. For restaurants, we consider the distances to the 10 nearest instances. The raw weights for different amenity types and different options are from Appendix \ref{appendix_amenity_weights}, and the normalized values of the weights are shown in Table \ref{table_counter_example}. Consider a network that has only one residential location $N=\{n\}$. The existing amenities in the network include 6 restaurants, a grocery store, and a school, and all the existing amenity locations are $2000$m away from the residential location. The set of potential locations in the networks is  $M=\{1,2,3,4,5,6,7\}$, $d_{nj}=1800$m for $j \in \{1,2,3,4,5,6\}$, and $d_{n7}=1$m.

Consider the two solution sets. Solution set $S$ has allocated additional restaurants to potential locations $\{1,2,3,4\}$. Solution set $T$ has allocated additional restaurants to potential locations $\{1,2,3,4,5,6\}$. Clearly, $S \subseteq T$. Element $e$ places a restaurant to location $7$, so we have $e \in V \setminus T$. The distances to the nearest instance of grocery stores and schools and the distances to the top 10 nearest instances of restaurants are shown in Table \ref{table_counter_example}.

\begin{table}[h]
\resizebox{\columnwidth}{!}{
\begin{tabular}{llllll}
\toprule
Amenity     & Weight & Dist. $S$    & Dist. $T$    & Dist. $S \cup \{e\}$ & Dist. $T \cup \{e\}$\\
\midrule
Grocery & 0.43  & 2000 & 2000 & 2000     & 2000     \\
Restaurant, 1   & 0.11  & 1800 & 1800 & 1        & 1        \\
Restaurant, 2       & 0.06  & 1800 & 1800 & 1800     & 1800     \\
Restaurant, 3       & 0.04  & 1800 & 1800 & 1800     & 1800     \\
Restaurant, 4       & 0.04  & 1800 & 1800 & 1800     & 1800     \\
Restaurant, 5       & 0.03  & 2000 & 1800 & 1800     & 1800     \\
Restaurant, 6       & 0.03  & 2000 & 1800 & 2000     & 1800     \\
Restaurant, 7       & 0.03  & 2000 & 2000 & 2000     & 1800     \\
Restaurant, 8       & 0.03  & 2000 & 2000 & 2000     & 2000     \\
Restaurant, 9       & 0.03  & 2000 & 2000 & 2000     & 2000     \\
Restaurant, 10      & 0.03  & 2000 & 2000 & 2000     & 2000     \\
School  & 0.14  & 2000 & 2000 & 2000     & 2000  \\
\bottomrule
\end{tabular}
}
\caption{Distances to different amenity types (including different options for restaurants) under different solution sets.}
\label{table_counter_example}
\end{table}
According to the values in Table \ref{table_counter_example}, we calculate the weighted distances $l_n$ and WalkScore values $f(l_n)$ under different solution sets, which is shown in Table \ref{table_counter_example_obj}.

\begin{table}[h]
\resizebox{\columnwidth}{!}{
\begin{tabular}{llllll}
\toprule
Solution Set   & $S$  &  $T$    & $S \cup \{e\}$ & $T \cup \{e\}$\\
\midrule
Weighted Distance ($l_n$) &  1950.0 & 1938.0 & 1746.11     & 1734.11     \\
Walkability Score ($f(l_n)$)     & 7.5 & 7.7 & 13.27        & 14.00        \\
\bottomrule
\end{tabular}
}
\caption{Objective under different solution sets.}
\label{table_counter_example_obj}
\end{table}

Since this network only has one residential location, the objective function is $F=f(l_n)$. Clearly, we have $\Delta_F{(e|S)}:=F(S \cup \{e\}) - F(S)=5.77$ and $\Delta_F{(e|T)}:=F(T \cup \{e\}) - F(T) =6.30$, so $\Delta_F{(e|S)} < \Delta_F{(e|T)}$. This violates the submodularity structure.

\section{Constraint Programming Model  (CP)}\label{appendix_cp}
\subsubsection{Variables}
Our constraint programming model uses the
index of the nodes to describe the selected locations for the amenities to be allocated and the assignment of amenities to residential locations. First, for allocation, variable $y_{k'a}$ indicates the index of the location of the $k'^{th}$ allocated instance of amenity type $a$. Second, for the assignment of amenities $a \in A^{depth}$ to residential locations, variables $x_{ia}^p$ indicate the index of location that residents at location $i$ visit for the $p^{th}$ nearest instance of type $a, a \in A^{depth}$. However, for $a \in A^{plain}$, assignment variables are not needed.

We define the domain for these two sets of variables. Note that in the CP model, elements in sets $M$, $N$, and $L_a, a \in A$ are indices of nodes in the network. Sets $M$, $N$, and $L_a, a \in A$ are disjoint. Then, the domains of $y_{k'a}$ and $x_{ia}^p$ are:
\begin{equation*}
    y_{k'a} \in M \cup \{dummy\},\forall k' \in \{1,...,k_a\},\forall a \in A,
\end{equation*}
\begin{equation*}
    x_{ia}^p \in M \cup L_a, \forall i \in N,\forall p \in P_a^{Y},\forall a \in A^{depth}.
\end{equation*}
A dummy node with unlimited capacity is introduced to account for the case when there are not enough locations in $M$ to allocate all amenity instances. Residents will not be assigned to the dummy node. 

In addition, we have continuous variables that describe the distances and WalkScore. For $a \in A^{plain}$,  $z_{iak'}$ describes the distance from location $i$ to the $k'^{th}$ allocated instance of amenity type $a$. For $a \in A^{depth}$, $z_{ia}^p$ describes the distance from $i$ to the $p^{th}$ nearest instance of $a$. Finally, $l_{i}$ and $f_{i}$ denote the weighted distance and the WalkScore at residential location $i$, respectively. 

\subsubsection{Constraints}
First, we enforce the requirements on the capacity of the candidate allocation locations. Let $Y$ be an array consisting of all $y_{k'a}$ variables such that $ k' \in \{1,...,k_a\},a\in A$. We ensure that the number of allocated instances across all amenity types at each candidate location $j$ does not exceed the node capacity.
\begin{equation*}
    \text{count}(Y,j) \leq c_j , \forall j \in M
\end{equation*}
Next, we calculate the weighted walking distances $l_i$ according to Eqn. (\ref{walk_formula}). For $a \in A^{plain}$, the distance to the nearest instance can be explicitly expressed as the minimum across all existing and allocated instances. For $a \in A^{depth}$, we take the weighted combination of the distances to the top-$r$ nearest instances:
\begin{equation*}\label{dist_cp}
\begin{split}
l_i & = \sum_{a \in A^{plain}} w_a \min(\min_{k' \in \{1,...,k_a\}} z_{iak'},\min_{j \in L_a} d_{ij})   \\
    & + \sum_{a \in A^{depth}} (\sum_{p \in P_a^{Y}} w_{a}^p z_{ia}^p + \sum_{p \in P_a^{N}} w_{a}^p D^{\infty}), \forall i \in N.
\end{split}
\end{equation*}
Variables $z_{iak'}$ and $z_{ia}^p$ in the constraint above are described using element constraints. For each $i \in N$, let $arr^i$ be an array where the $t^{th}$ element of $arr^i$ is:
\begin{equation*}
    arr^i[t] = d_{it}, \forall t \in \cup_{a \in A}L_a \cup M, \forall i \in N.
 \end{equation*}
Then, the distances from $i$ to amenity locations can be obtained by array indexing. Since $y_{k'a}$ is the node index of $k'^{th}$ instance of $a$, we have
\begin{equation*}
    z_{iak'}=arr^i[y_{k'a}], \forall i \in N, \forall a \in A^{plain},\forall k' \in \{1,...,k_a\}.
\end{equation*}
Similarly, $z_{ia}^p$ can be expressed as:
\begin{equation*}
    z_{ia}^p=arr^i[x_{ia}^p], \forall i \in N, \forall a \in A^{depth},\forall p \in P_a^Y.
\end{equation*}

Since assignment variables are used for $a \in A^{depth}$, we also ensure that any location should be allocated before they are assigned:
\begin{equation*}
\begin{split}
     \text{any}_{i\in N,p \in P_a^Y}(x_{ia}^p=j) \Rightarrow \text{any}_{k' \in \{1,...,k_a\}}(y_{k'a}=j),\\
     \forall j \in M, \forall a \in A^{depth}.
\end{split}
\end{equation*}

As mentioned in the MILP model, for $a \in A^{depth}$, each choice $p \in P_a^{Y}$ should be a different instance of $a$. When the choice corresponds to an existing amenity ($j \in L_a$), we ensure that the instance appears only once among all choices for $i$: 
\begin{equation*}
    \text{count}([x_{ia}^1,...,x_{ia}^{|P_a^Y|}],j) \leq 1, \forall i \in N, \forall j \in L_a,\forall a \in A^{depth}.
\end{equation*}
When the choice corresponds to candidate locations ($j \in M$), we ensure that the number of choices provided at $j$ does not exceed the number of instances allocated to $j$:
\begin{equation*}
\begin{split}
    \text{count}([x_{ia}^1,...,x_{ia}^{|P_a^Y|}],j) \leq  \text{count}([y_{1a},...,y_{k_aa}],j), \\ \forall i \in N, \forall j \in M,\forall a \in A^{depth}.
\end{split}
\end{equation*}

Since index-based models can introduce symmetry in allocation, we break the symmetry by specifying the order of node indices between instances of the same type:
\begin{equation*}
    y_{k_1a} \leq y_{k_2a}, \forall a \in A, \forall k_1,k_2 \in \{1,...,k_a\}, k_1<k_2.
\end{equation*}


Lastly, we ensure that the PWL relationship in Eqn. (\ref{pwl_of_dist}) holds.

\subsubsection{Objective} The objective is to maximize $F$ in Eqn. (\ref{walkscore_obj}).

\subsubsection{Note}
Note that in the CP model, all continuous variables are entirely driven by discrete variables $y_{k'a}$ and $x_{ia}^p$ in the search. It was found that this approach is more efficient than discretizing the continuous variables, as there is no need for the solver to search over the continuous variables given that they are explicitly linked to discrete decision variables. Discretizing the distances and scores can create a very large search space. Also, our CP model uses an index-based formulation, which was found to be more efficient than a binary formulation. 

\section{Amenity Weights}\label{appendix_amenity_weights}
The raw weights for grocery stores, restaurants, and schools are obtained from the WalkScore methodology \cite{walk_score} and the values are as follows. For restaurants, the WalkScore methodology considers the weights for the top 10 nearest options. In our case study, these raw weights are normalized so that the weights for all amenities and all options sum up to 1.

\begin{quote}
Grocery: [3],\\
Restaurants: [.75, .45, .25, .25, .225, .225, .225, .225, .2, .2],\\
Schools: [1],
\end{quote}

\section{Piecewise-Linear Approximation for WalkScore}\label{appendix_pwl}
The piecewise-linear WalkScore approximation is parameterized by $\bar{t}$, the set of breakpoints in the piecewise-linear function (Table~\ref{pwl_param}).
\begin{table}[h]
\begin{tabular}{llllll}
Distance (meters)   & 0 &  400    & 1800 & 2400\\
WalkScore   &  100 & 95 & 10    &0  \\
\end{tabular}
\caption{WalkScore Parameters.}
\label{pwl_param}
\end{table}

\section{Neighbourhood Improvement Areas}\label{appendix_nia_name}
The neighbourhood IDs and names of the 31 Neighbourhood Improvement Areas (NIAs) studied are shown in Table \ref{nia_id_mapping}. This data is publicly available from The City of Toronto’s Open Data Portal \cite{data_NIA}.
\begin{table}[h]
    \centering
\begin{tabular}{rl}
\toprule
 ID &                             Name \\
\midrule
         2 & Mount Olive-Silverstone-Jamestown \\
         3 &      Thistletown-Beaumond Heights \\
         5 &                  Elms-Old Rexdale \\
         6 &     Kingsview Village-The Westway \\
        21 &                    Humber Summit \\
        22 &                       Humbermede \\
        24 &                      Black Creek \\
        25 &           Glenfield-Jane Heights \\
        26 &             Downsview-Roding-CFB \\
        27 &          York University Heights \\
        28 &                           Rustic \\
        43 &                 Victoria Village \\
        44 &                  Flemingdon Park \\
        55 &                 Thorncliffe Park \\
        61 &                    Taylor-Massey \\
        72 &                      Regent Park \\
        85 &                   South Parkdale \\
        91 &               Weston-Pellam Park \\
       110 &        Keelesdale-Eglinton West \\
       111 &               Rockcliffe-Smythe \\
       112 &         Beechborough-Greenbrook \\
       113 &                          Weston \\
       115 &                    Mount Dennis \\
       121 &                        Oakridge \\
       124 &                    Kennedy Park \\
       125 &                         Ionview \\
       135 &                     Morningside \\
       136 &                       West Hill \\
       137 &                          Woburn \\
       138 &                   Eglinton East \\
       139 &             Scarborough Village \\
\bottomrule
\end{tabular}
    \caption{IDs and neighbourhood names of the 31 Neighbourhood Improvement Areas (NIAs) in the City of Toronto.}
    \label{nia_id_mapping}
\end{table}

\section{Current WalkScore}\label{appendix_cur_walk}
Current WalkScores for each NIA are shown in Fig. \ref{cur_nia_score}.

\begin{figure}[h]
\centering
 \includegraphics[width=1\columnwidth]{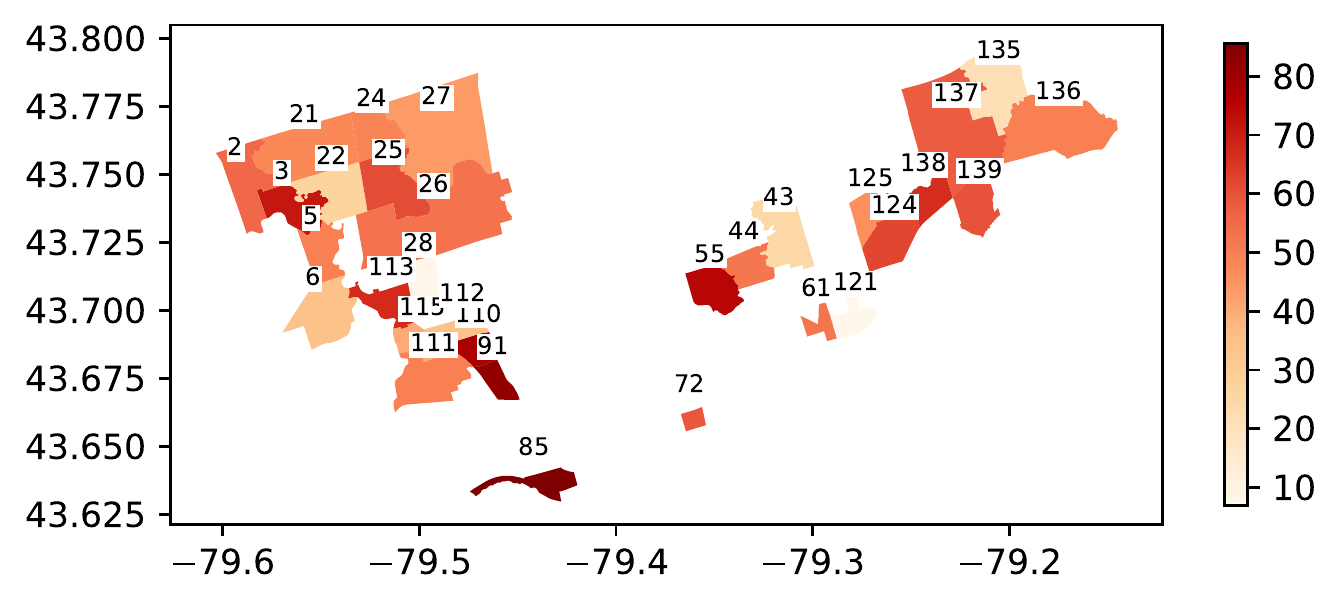}
  \caption{Current average WalkScore with MultiChoice~\problem.}\label{cur_nia_score}
\end{figure}

\end{document}